\newtheorem{thm}{Theorem}[section]
\newtheorem{prop}[thm]{Proposition}
\newtheorem{lem}[thm]{Lemma}
\newtheorem{cor}[thm]{Corollary}
\theoremstyle{definition}
\theoremstyle{remark}
\newcommand{\abs}[1]{\lvert#1\rvert}
\newcommand{\Abs}[1]{\left\lvert#1\right\rvert}
\newcommand{\norm}[1]{\|#1\|}
\newcommand{\bdry}{\partial}
\DeclareMathOperator{\Vol}{Vol}
\DeclareMathOperator{\diam}{diam}
\DeclareMathOperator{\grad}{grad}
\DeclareMathOperator{\tr}{tr}
\DeclareMathOperator{\Ric}{Ric}
\numberwithin{equation}{section}
\title{Proper harmonic maps between asymptotically hyperbolic manifolds}
\author{Kazuo Akutagawa}
\address{Department of Mathematics, Tokyo Institute of Technology}
\email{akutagawa@math.titech.ac.jp}
\thanks{The first author is supported in part by the Grant-in-Aid for Challenging Exploratory Research,
	Japan Society for the Promotion of Science, No.~24654009.}
\author{Yoshihiko Matsumoto}
\address{Department of Mathematics, Tokyo Institute of Technology}
\email{matsumoto@math.titech.ac.jp}
\subjclass[2010]{Primary 53C43; Secondary 58J32.}
\thanks{The second author is supported in part by the Grant-in-Aid for JSPS Fellows,
	Japan Society for the Promotion of Science, No.~26-11754.}
\begin{document}

\begin{abstract}
	Generalizing the result of Li and Tam for the hyperbolic spaces, 
	we prove an existence theorem on the Dirichlet problem for harmonic maps 
	with $C^1$ boundary conditions at infinity between asymptotically hyperbolic manifolds.
\end{abstract}

\maketitle

\section*{Introduction}

Various aspects of proper harmonic maps between noncompact manifolds are still yet to be clarified.
For the hyperbolic spaces, the asymptotic Dirichlet problem for such maps between them is
investigated in a series of papers of Li and Tam~\cite{LiTam1,LiTam2,LiTam3}
and by the first author~\cite{Akutagawa} (for the hyperbolic disks).
In this article, we shall extend their existence and uniqueness result for
general asymptotically hyperbolic manifolds.

We first set up some definitions.
Let $M$ be a noncompact manifold of dimension at least two equipped with a smooth Riemannian metric $g$,
and we assume that $M$ compactifies into a smooth manifold-with-boundary $\overline{M}=M \sqcup \bdry M$,
which is fixed implicitly.
Then $(M,g)$ is called a \emph{$C^2$ conformally compact manifold} if $r^2g$ extends to a
$C^2$ Riemannian metric $\overline{g}$ on $\overline{M}$,
where $r\in C^\infty(\overline{M})$ is a smooth (positive) boundary defining function.
The conformal class on $\bdry M$ represented by $\overline{g}|_{T\bdry M}$ is called the conformal infinity of $g$,
which we assume is smooth for simplicity. If in addition $(M,g)$ satisfies
\begin{equation}
	\label{eq:asymptotically_hyperbolic}
	\abs{d\log r}_{g}=\abs{dr}_{\overline{g}}=1\qquad\text{on $\bdry M$},
\end{equation}
which is equivalent to that the sectional curvature $K_g$ uniformly tends to $-1$ at the boundary
(see~\cite{Mazzeo}),
then $(M,g)$ is said to be \emph{asymptotically hyperbolic}, or just AH for brevity.

Suppose we are given two $C^2$ conformally compact AH manifolds $(M,g)$ and $(N,h)$,
whose dimensions are $m+1$ and $n+1$, where $m$, $n\ge 1$.
For any boundary map $f\in C^0(\bdry M,\bdry N)$, we consider the space of its extensions to the whole manifold
$\overline{M}$:
\begin{equation*}
	\mathcal{M}_f
	=\set{u\in C^0(\overline{M},\overline{N})|\text{$u$ maps $\bdry M$ into $\bdry N$ and $u|_{\bdry M}=f$}}.
\end{equation*}
If $\mathcal{M}_f$ is nonempty, then each of its connected components is called a
\emph{relative homotopy class}.
We look for a harmonic map $u\in C^\infty(M,N)$,
which has by definition vanishing tension field $\tau(u)$, in a given such class.
Now our main theorem, which extends a result of Li and Tam~\cite[Theorem 6.4]{LiTam3}, is stated as follows.
This can also be regarded as a noncompact version of the celebrated result of Eells--Sampson~\cite{EellsSampson}.

\begin{thm}
	\label{thm:existence}
	Let $(M,g)$, $(N,h)$ be as above, and assume that $g$ satisfies
	\begin{equation}
		\label{eq:asymptotic_Einstein}
		\abs{\Ric(g)+ng}_g=o(r)\qquad\text{as $r\to 0$}
	\end{equation}
	and $h$ has nonpositive sectional curvature.
	If $u_0\in C^1(\overline{M},\overline{N})$ satisfies $u_0(\bdry M)\subset\bdry N$ and
	$f=u_0|_{\bdry M}$ has nowhere vanishing differential $df\colon T\bdry M\longrightarrow T\bdry N$,
	then there exists a unique proper harmonic map $u\in C^1(\overline{M},\overline{N})\cap C^\infty(M,N)$
	within the same relative homotopy class.
\end{thm}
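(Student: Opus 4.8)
The strategy is the classical Eells–Sampson heat-flow method adapted to the conformally compact setting, following the template of Li and Tam but with the Einstein-type asymptotics replacing explicit model computations. First I would set up the harmonic map heat flow $\partial_t u = \tau(u)$ with initial data $u_0$, and establish short-time existence in an appropriate weighted Hölder space on $\overline M$ that controls $C^1$ behavior up to the boundary. The nondegeneracy hypothesis on $df$ is used here and throughout: because $f$ is an immersion of $\partial M$ into $\partial N$, near the boundary the map $u_0$ looks like an isometry of the collar $(0,\varepsilon)_r \times \partial M$ into the collar of $\partial N$ up to lower-order terms, so $|du_0|_{g,h}$ is bounded and bounded away from zero, and in particular $u_0$ is already (approximately) harmonic to leading order at $\partial M$. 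The asymptotic Einstein condition \eqref{eq:asymptotic_Einstein} is precisely what is needed to make the leading-order obstruction to harmonicity at the boundary vanish at rate $o(r)$, which in turn allows the flow to preserve the $C^1(\overline M,\overline N)$ regularity.

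Next I would derive the a priori estimates that keep the flow from escaping to the boundary or developing energy concentration. The key pointwise quantities are the energy density $e(u)=|du|^2$ and the distance-type function measuring how far $u$ strays from $u_0$ near $\partial M$. Using the Bochner–Eells–Sampson formula for $|du|^2$ under the flow, together with $\mathrm{Ric}(g)\ge -ng + o(r)$ and the nonpositivity of the target curvature $K_h\le 0$, one obtains a differential inequality of the form $(\partial_t-\Delta)e(u)\le C\,e(u)$, with the curvature terms from $N$ having a favorable sign; comparing against a suitable barrier built from $r$ (a supersolution exploiting that $-\Delta r \sim$ const near $\partial M$ on an AH manifold, refined by \eqref{eq:asymptotic_Einstein}) gives a uniform bound $e(u)\le C$ on $\overline M\times[0,T)$. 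A companion maximum-principle argument on the target distance function shows $u_t$ stays within a fixed relative homotopy class and within bounded distance of $u_0$, so no boundary escape occurs. These two bounds upgrade via interior Schauder theory to uniform $C^\infty_{\mathrm{loc}}(M)$ bounds and, near $\partial M$, to uniform weighted $C^1(\overline M)$ bounds, yielding long-time existence $T=\infty$.

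Then comes convergence as $t\to\infty$: the energy $E(u_t)$ is nonincreasing along the flow with $-\tfrac{d}{dt}E(u_t)=\int_M|\tau(u_t)|^2$, so some time sequence has $\tau(u_{t_k})\to 0$ in $L^2_{\mathrm{loc}}$; combined with the uniform higher-order estimates this produces a subsequential limit $u_\infty\in C^\infty(M,N)\cap C^1(\overline M,\overline N)$ with $\tau(u_\infty)=0$, $u_\infty|_{\partial M}=f$, lying in the prescribed class, and $u_\infty$ is proper because the uniform $C^1$-up-to-boundary control forces $u_\infty(\partial M)\subset\partial N$ with the collar structure preserved. Finally, uniqueness within the relative homotopy class follows from the nonpositive curvature of $N$: given two such harmonic maps $u,u'$ with the same boundary values and homotopic rel boundary, the function $\varphi=\tfrac12 d(u,u')^2$ is subharmonic by the standard second-variation/Bochner computation (using $K_h\le 0$), vanishes on $\partial M$, and the asymptotic geometry forces $\varphi\to 0$ at infinity, so $\varphi\equiv 0$ by the maximum principle; the geodesic homotopy then collapses, giving $u=u'$.

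\medskip

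\emph{Main obstacle.} The delicate point is the boundary regularity — showing that the flow preserves membership in $C^1(\overline M,\overline N)$ and that the limit is genuinely $C^1$ up to $\partial M$, not merely smooth in the interior with bounded energy. This is where hypothesis \eqref{eq:asymptotic_Einstein} must be used quantitatively: the naive barrier for $e(u)$ only controls the energy density to order $O(1)$, but to conclude $C^1(\overline M)$ regularity one needs the $o(r)$ decay of $\mathrm{Ric}(g)+ng$ to close a sharper estimate on the normal derivative of $u$ in the collar, essentially matching $u$ to the boundary isometry $f$ at rate $o(1)$. Handling this likely requires a careful parametrix/indicial-root analysis of the linearized operator in the AH collar, of the kind familiar from the work on the asymptotic Dirichlet problem, and is the technical heart of the argument.
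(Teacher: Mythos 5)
Your overall strategy (heat flow à la Eells--Sampson / Li--Tam) is a genuinely different route from the paper, which uses an exhaustion method: for each $\delta>0$ it solves the Dirichlet problem on the compact-with-boundary region $\mathcal{B}_\delta = M\setminus\{0<r\le\delta\}$ by appealing to Hamilton's theorem, establishes uniform bounds on $d(u_\delta,v)$ by a J\"ager--Kaul Hessian comparison and a barrier built from the resolvent of $\Delta_g$ (via Biquard--Lee Fredholm theory), and passes to a limit. In principle the heat flow could also work, as Leung's thesis shows, but both routes hinge on the same ingredient that your proposal elides.

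There is a concrete gap in the second paragraph: you assert that because $f$ is an immersion, ``$u_0$ is already (approximately) harmonic to leading order at $\partial M$.'' This is false. For a $C^1$ map $u=(u^\alpha,\rho)$ in collar coordinates, $\abs{\tau(u)}=o(1)$ forces the specific Neumann data
$\partial u^\alpha/\partial r|_{\partial M}=0$ and $\partial\rho/\partial r|_{\partial M}=\sqrt{\hat e(f)/m}$
(this is Lemma~\ref{lem:Neumann_data} in the paper), which an arbitrary $C^1$ extension $u_0$ of $f$ has no reason to satisfy. Generically $\abs{\tau(u_0)}_h$ blows up like $r^{-1}$. This is precisely why the paper devotes all of Section~\ref{sec:approximate} to constructing an approximate solution $v\in C^1(\overline M,\overline N)$ with $\abs{\tau(v)}=o(1)$, mimicking the half-space Poisson kernel on $\partial M$ and then patching; the introduction even singles this out as the missing step in prior work. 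Without that construction, the barrier argument for $e(u_t)$ won't close (the source term $\abs{\tau}$ is unbounded), and the flow need not preserve $C^1$ regularity up to the boundary. Your proposal also attributes the role of \eqref{eq:asymptotic_Einstein} to a Ricci lower bound in the Bochner inequality, but the paper uses it only through Lemma~\ref{lem:CDLS}, to obtain the collar normal form $\overline g=dr^2+\hat g+O(r^2)$ that the Poisson-kernel construction and the Christoffel computation \eqref{eq:Christoffel_symbols} require; it plays no role as a curvature sign condition. Finally, your uniqueness sketch with $\tfrac12 d(u,u')^2$ glosses over two points the paper handles explicitly: when $\pi_1(N)\neq 1$ one must lift to the universal cover and use $\tilde d(\tilde u_1,\tilde u_2)$, and proving that this quantity tends to $0$ at $\partial M$ is itself a nontrivial step requiring the Omori--Yau maximum principle together with the barrier of Lemma~\ref{lem:barrier}, not a consequence of ``the asymptotic geometry.''
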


The curvature conditions on $g$ and $h$ are imposed merely for technical reasons and we do not know whether
they are necessary for the conclusion to hold.
While the nonpositivity of $K_h$ is a strict condition,
the asymptotic Einstein condition \eqref{eq:asymptotic_Einstein} is
not too restrictive (see Lemma \ref{lem:CDLS}).

There are several preceding works regarding harmonic maps between AH manifolds.
Let $g$ and $h$ be smooth conformally compact.
Under this assumption, Leung showed in his thesis~\cite{Leung} that,
if $(N,h)$ has negative sectional curvature and $v\in C^{2,\alpha}(\overline{M},\overline{N})$
satisfies $\abs{\tau(v)}=O(r^\nu)$ for some $\nu>0$,
then the solution of the heat equation with initial data $v$ converges to a harmonic map $u$
with $d_h(u,v)=O(r^\theta)$ for some $\theta>0$.
Economakis, again in his thesis~\cite{EconomakisThesis}, proved that any $C^{1,1}$ proper harmonic map whose
boundary map is smooth and has nowhere vanishing energy density admits ``polyhomogeneous'' expansion
at the boundary.
Recently, standing on a work of Donnelly~\cite{Donnelly},
Fotiadis~\cite{Fotiadis} took a general approach on noncompact complete manifolds
using an estimate of Green's function of $(M,g)$ to show that
the existence of a harmonic map boils down to that of an approximate solution $v$
and a positive lower bound of the spectrum of $(M,g)$.
For AH manifolds such a spectrum bound is established by Mazzeo~\cite{Mazzeo},
so we may apply Fotiadis' result to almost recover Leung's one.

Compared to the above-mentioned existing works, this article has two features.
First, we carry out the construction of a good approximate solution $v$.
This is necessary for establishing existence results, but it has been missing until now.
Second, in our assumption, the Dirichlet data $f$ has only $C^1$ regularity.
This makes the works of Leung and Fotiadis inapplicable, so
the process of turning $v$ into a genuine harmonic map $u$ is again discussed in a different way.
We are interested in this weak regularity assumption because
it is supposedly the critical regularity to assert the uniqueness.
In fact, in the case of the hyperbolic spaces and when $m=n$, there are families of $C^\mu$ harmonic maps
for some $\mu\in(0,1/2]$ whose boundary maps are the identity map on $S^m$,
as observed by Li and Tam~\cite{LiTam2} for $m=n=1$ and by Economakis~\cite{Economakis} in general dimensions.
Throughout the whole argument, we will basically proceed by modifying the approach taken
in~\cite{LiTam3} step by step. However, in some places we really need new ideas including
application of deep analytic results on AH manifolds (see the proof of Lemma \ref{lem:barrier} for instance).

The authors would like to thank Robin Graham and Man Chun Leung for their help.

\section{Approximate solution}
\label{sec:approximate}

We shall identify a neighborhood of $\bdry M$ in $\overline{M}$ with the product $\bdry M\times[0,r^*)$ for
some $r^*>0$.
Our first lemma, which is borrowed from an article of
Chru\'sciel, Delay, Lee, and Skinner~\cite[Lemma 3.1]{CDLS}, is concerning a good identification.

\begin{lem}
	\label{lem:CDLS}
	Let $(M,g)$ be a $C^2$ conformally compact AH manifold with smooth conformal infinity $\gamma$.
	Take a smooth representative $\Hat{g}$ of $\gamma$ arbitrarily.
	If $g$ satisfies \eqref{eq:asymptotic_Einstein},
	then there exists a $C^3$ map $\Psi$ from an open neighborhood of $\bdry M$ in $\overline{M}$ onto
	$\bdry M\times[0,r^*)$ for some $r^*>0$ that restricts to the identity map on $\bdry M$ and to
	a diffeomorphism between the interiors for which
	\begin{equation*}
		r^2(\Psi^{-1})^*g=dr^2+\Hat{g}+O(r^2)\qquad\text{as $r\to 0$},
	\end{equation*}
	where $r\colon\bdry M\times[0,r^*)\longrightarrow[0,r^*)$ denotes the projection onto the second factor,
	in the sense that $\abs{r^2(\Psi^{-1})^*g-(dr^2+\Hat{g})}_{dr^2+\Hat{g}}=O(r^2)$.
	Conversely, existence of such $\Psi$ implies \eqref{eq:asymptotic_Einstein}.
\end{lem}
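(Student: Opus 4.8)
The plan is to realize $\Psi$ as the collar diffeomorphism attached to a geodesic (special) boundary defining function, and then to read \eqref{eq:asymptotic_Einstein} off the Ricci tensor of $g$ written in that gauge. Write $n:=\dim\bdry M$, so that $\dim M=n+1$, and fix once and for all a smooth boundary defining function $\rho$ on $\overline{M}$ with $\rho^2g|_{T\bdry M}=\Hat{g}$; note that $\rho^2g$ is then a $C^2$ Riemannian metric on $\overline{M}$. In this gauge the whole lemma will reduce to a single elementary nondegeneracy statement about symmetric $2$-tensors.

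\emph{Step 1: the special defining function.} I would first look for a boundary defining function of the form $r=e^w\rho$, with $w|_{\bdry M}=0$, such that $\abs{dr}_{r^2g}\equiv1$ throughout a collar of $\bdry M$, and not only on $\bdry M$ itself, where it holds automatically by \eqref{eq:asymptotically_hyperbolic} (this last condition being independent of the choice of boundary defining function). A short computation gives $\abs{dr}_{r^2g}^2=\abs{d\rho+\rho\,dw}_{\rho^2g}^2$, so the requirement turns into the first-order equation
\[
	2(\grad_{\rho^2g}\rho)(w)+\rho\,\abs{dw}_{\rho^2g}^2=\frac{1-\abs{d\rho}_{\rho^2g}^2}{\rho}
\]
with Cauchy data $w=0$ on $\bdry M$. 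The right-hand side extends to a $C^1$ function on the collar, because $\abs{d\rho}_{\rho^2g}^2$ lies in $C^2(\overline{M})$ and equals $1$ on $\bdry M$; and the Cauchy problem is noncharacteristic, since the characteristic direction along $\bdry M$ is $\grad_{\rho^2g}\rho$, which is transverse to $\bdry M$. Hence the method of characteristics produces a unique solution $w$ near $\bdry M$, and taking $\Psi$ to be the associated product decomposition (whose slices are the level sets of $r$ and whose $\bdry M$-direction is swept out by the integral curves of $\grad_{r^2g}r$, which are unit-speed $r^2g$-geodesics meeting $\bdry M$ orthogonally) puts $g$ into the geodesic normal form
\[
	r^2(\Psi^{-1})^*g=dr^2+g_r,\qquad g_0=\Hat{g},
\]
where $g_r$ is a one-parameter family of metrics on $\bdry M$. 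I expect the genuinely delicate point of the whole lemma to be the accompanying regularity bookkeeping — checking that, when $g$ is merely $C^2$ conformally compact, one may take $\Psi$ of class $C^3$ and $g_r$ of class $C^2$ in $r$ — and here I would simply invoke \cite{CDLS}, where precisely this is carried out.

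\emph{Step 2: from the curvature to the expansion.} In the normal form of Step 1 the assertion becomes: \eqref{eq:asymptotic_Einstein} holds if and only if $\dot{g}:=\partial_rg_r|_{r=0}$ vanishes; indeed, since $g_r$ is $C^2$ in $r$, Taylor's theorem turns $\dot{g}=0$ into $g_r=\Hat{g}+O(r^2)$, which is the stated conclusion. To prove this equivalence I would apply the conformal transformation law for the Ricci tensor to $g=r^{-2}\overline{g}$, $\overline{g}=dr^2+g_r$, with conformal factor $-\log r$; using $\overline{g}_{rr}=1$ and $\overline{g}_{ri}=0$, the tangential components come out as
\[
	\bigl(\Ric(g)+ng\bigr)_{ij}=\Ric(\overline{g})_{ij}-\frac{n-1}{2r}\,\partial_r(g_r)_{ij}-\frac{1}{2r}\bigl(\tr_{g_r}\partial_rg_r\bigr)(g_r)_{ij},
\]
and $(\Ric(g)+ng)_{rr}$, $(\Ric(g)+ng)_{ri}$ are governed by analogous, milder, identities. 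Since $\abs{T}_g=r^2\abs{T}_{\overline{g}}$ for any $2$-tensor $T$, condition \eqref{eq:asymptotic_Einstein} says exactly that $\abs{\Ric(g)+ng}_{\overline{g}}=o(r^{-1})$, i.e.\ that the $r^{-1}$-order part of each component of $\Ric(g)+ng$ vanishes. For the tangential part this $r^{-1}$-coefficient is $-\tfrac12\bigl((n-1)\dot{g}_{ij}+(\tr_{\Hat{g}}\dot{g})\,\Hat{g}_{ij}\bigr)$; taking the $\Hat{g}$-trace of the vanishing equation $(n-1)S+(\tr_{\Hat{g}}S)\Hat{g}=0$ gives $(2n-1)\tr_{\Hat{g}}S=0$, hence $\tr_{\Hat{g}}S=0$ and then $(n-1)S=0$, so $S=0$ for every $n\ge1$ (for $n=1$ one uses that $S$ is a multiple of $\Hat{g}$). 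Therefore \eqref{eq:asymptotic_Einstein} forces $\dot{g}=0$, which gives the forward direction. For the converse, if such a $\Psi$ exists with $g_r=\Hat{g}+O(r^2)$, then $\dot{g}=0$ and, by $C^2$-regularity, $\partial_rg_r=O(r)$ and $\tr_{g_r}\partial_rg_r=O(r)$; plugging this into the identities above yields $\abs{\Ric(g)+ng}_{\overline{g}}=O(1)=o(r^{-1})$, that is, \eqref{eq:asymptotic_Einstein}.
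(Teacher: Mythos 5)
Your proposal is essentially the proof that the paper outsources: the paper's own ``proof'' of this lemma is simply the remark that \cite[Lemma~3.1]{CDLS} is stated for exactly Einstein metrics but its proof only uses \eqref{eq:asymptotic_Einstein}, with the converse likewise visible from that proof. What you have done is reconstruct that [CDLS] argument---Step~1 (the geodesic boundary defining function and the resulting normal form, with the delicate $C^2\Rightarrow C^3$ regularity bookkeeping cited back to [CDLS]) and Step~2 (reading the $r^{-1}$-coefficient of $\Ric(g)+ng$ off the conformal change formula, which via the elementary algebra $(n-1)S+(\tr_{\Hat g}S)\Hat g=0\Rightarrow S=0$ shows $\dot g=0$ is equivalent to \eqref{eq:asymptotic_Einstein})---so this is the same route, just written out. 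One small slip: in the displayed identity for $(\Ric(g)+ng)_{ij}$ the two Hessian/Laplacian terms should carry $+$ rather than $-$ signs, namely $(\Ric(g)+ng)_{ij}=\Ric(\overline g)_{ij}+\frac{n-1}{2r}\partial_r(g_r)_{ij}+\frac{1}{2r}(\tr_{g_r}\partial_r g_r)(g_r)_{ij}$; this is inconsequential for the argument, since what matters is only that the $r^{-1}$-coefficient is a nonzero multiple of $(n-1)\dot g+(\tr_{\Hat g}\dot g)\Hat g$.
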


In~\cite[Lemma 3.1]{CDLS}, the metric $g$ is assumed to be exactly Einstein, but the proof shows that
\eqref{eq:asymptotic_Einstein} suffices. The converse is also clear from the proof.

We may even assume that $\Psi$ gives a diffeomorphism up to the boundary because we can replace the
$C^\infty$ structure of $\overline{M}$ with the one that $\Psi$ induces
and it makes no difference to the conclusion of Theorem \ref{thm:existence}.
We furthermore omit $\Psi$ and just write, for example,
\begin{equation}
	\label{eq:asymptotic_Einstein_rephrased}
	\overline{g}=r^2g=dr^2+\Hat{g}+O(r^2).
\end{equation}
As for $\overline{N}$, since we do not impose \eqref{eq:asymptotic_Einstein}, we simply identify
an open neighborhood of $\bdry N\subset\overline{N}$ and $\bdry N\times[0,\rho^*)$ by a diffeomorphism.
Then we get
\begin{equation}
	\label{eq:normalization_of_h}
	\overline{h}=\rho^2 h=d\rho^2+\Hat{h}+O(\rho).
\end{equation}
Such identifications are fixed throughout this article.
From now on, as in \eqref{eq:asymptotic_Einstein_rephrased} and \eqref{eq:normalization_of_h},
we omit both ``as $r\to 0$'' and ``as $\rho\to 0$'' in the big/small $O$ notations. 

\begin{lem}
	\label{lem:Laplacian_estimate_of_C1_function}
	Let $p\in\bdry M$ and $\mathcal{U}$ an open neighborhood of $p$ in $\overline{M}$.
	If $w\in C^1(\mathcal{U})\cap C^2(\mathring{\mathcal{U}})$,
	where $\mathring{\mathcal{U}}=\mathcal{U}\setminus\bdry M$,
	then there exists a sequence $\set{p_k}$ of points in $\mathring{\mathcal{U}}$ converging to $p$ for which
	\begin{equation*}
		(r\Delta_{\overline{g}}w)(p_k)\to 0\qquad \text{as $k\to\infty$},
	\end{equation*}
	where $\Delta_{\overline{g}}$ is the (nonpositive) Laplacian with respect to $\overline{g}$.
\end{lem}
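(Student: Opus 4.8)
The plan is to exploit the fact that, near $\bdry M$, the compactified metric $\overline{g}$ looks like a product $dr^2 + \Hat{g}$ up to $O(r^2)$ by Lemma \ref{lem:CDLS}, so that $\Delta_{\overline{g}}$ is, to leading order, $\bdry_r^2 + \Delta_{\Hat{g}}$ plus lower-order terms; the tangential part $\Delta_{\Hat{g}} w$ is bounded near $p$ because $w \in C^1$, so the only term that could blow up when multiplied by $r$ is the normal second derivative $\bdry_r^2 w$, and the point is to find a sequence along which $r\,\bdry_r^2 w \to 0$. The key observation is a one-dimensional mean-value phenomenon: along a fixed integral curve $\gamma(t)$ of the normal direction $\bdry_r$ emanating from (near) $p$, the function $t \mapsto w(\gamma(t))$ is $C^1$ up to $t=0$ and $C^2$ for $t>0$, so $w(\gamma(t))/t \to \bdry_t w(\gamma(0))$ (or a difference-quotient version) forces $\bdry_t^2 w(\gamma(t_k)) \cdot t_k$ to have a subsequence tending to $0$; more concretely, if $r\,\bdry_r^2 w$ were bounded below away from $0$ (say $\geq c > 0$) on some punctured neighborhood, then integrating twice would contradict the existence of a finite one-sided derivative of $w$ in the $r$-direction at $p$.

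Here is the sequence of steps I would carry out. First, set up normal-type coordinates: fix the product identification near $\bdry M$ from \eqref{eq:asymptotic_Einstein_rephrased}, write points as $(x,r) \in \bdry M \times [0,r^*)$ with $p = (p', 0)$, and expand
\begin{equation*}
	\Delta_{\overline{g}} w = \bdry_r^2 w + \bigl(\text{first order in }\bdry_r\bigr) w + \Delta_{\Hat{g}_r} w_x + (\text{terms bounded in }C^0),
\end{equation*}
where $\Hat{g}_r$ is the induced metric on the slice and $w_x$ denotes tangential differentiation; since $w \in C^1(\mathcal{U})$, every term except $\bdry_r^2 w$ is bounded near $p$, hence killed after multiplying by $r$. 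Second, reduce to the one-variable problem: it suffices to produce a sequence $r_k \to 0$ (we may keep $x$ fixed equal to $p'$, or let it vary slightly) with $r_k\,\bdry_r^2 w(p', r_k) \to 0$. Third, prove this one-variable claim by contradiction: if $\liminf_{r \to 0^+} |r\,\bdry_r^2 w(p',r)| \geq 2c > 0$, then $\bdry_r^2 w(p',r)$ has constant sign for small $r$ and $|\bdry_r^2 w(p',r)| \geq c/r$; integrating from $r$ to $r_0$ shows $|\bdry_r w(p',r)| \gtrsim c\log(r_0/r) \to \infty$, contradicting $\bdry_r w(p', \cdot)$ being continuous (indeed bounded) up to $r = 0$ because $w \in C^1(\mathcal{U})$. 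Hence $\liminf_{r\to 0^+} |r\,\bdry_r^2 w(p',r)| = 0$, giving the desired sequence; pulling back through the (at worst $C^3$) identification $\Psi$ of Lemma \ref{lem:CDLS} yields points $p_k \to p$ in $\mathring{\mathcal{U}}$ with $(r\Delta_{\overline{g}} w)(p_k) \to 0$.

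The main obstacle is making the expansion of $\Delta_{\overline{g}}$ honest under the limited regularity: $\overline{g}$ is only $C^2$ and the identification $\Psi$ only $C^3$, so the Christoffel symbols are merely $C^1$ and one must be careful that the ``lower-order'' coefficients multiplying $\bdry_r w$ and the tangential derivatives are genuinely bounded (they are, being continuous up to the boundary) and that no hidden $\bdry_r^2$ contribution sneaks in through the cross terms $g^{rj}$ with $j$ tangential. In the product-plus-$O(r^2)$ gauge of Lemma \ref{lem:CDLS} the mixed components $\overline{g}^{rj}$ are $O(r^2)$ and $\overline{g}^{rr} = 1 + O(r^2)$, so the coefficient of $\bdry_r^2 w$ is $1 + O(r^2)$ and the remaining second-order terms are tangential — hence bounded — which is exactly what the argument needs; verifying this is a short but essential computation. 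A secondary subtlety is that one should allow the base point $x$ to move: strictly fixing $x = p'$ is fine since $w(p', \cdot)$ inherits $C^1 \cap C^2$ regularity from $w$, so no real difficulty arises, but it is worth stating explicitly.
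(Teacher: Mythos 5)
Your decomposition step is where the proof breaks down. You write
\begin{equation*}
	\Delta_{\overline{g}} w = \partial_r^2 w + \bigl(\text{first order in }\partial_r\bigr) w + \Delta_{\Hat{g}_r} w_x + (\text{bounded terms}),
\end{equation*}
and then assert that ``since $w\in C^1(\mathcal{U})$, every term except $\partial_r^2 w$ is bounded near $p$, hence killed after multiplying by $r$.'' That is false for the tangential Laplacian $\Delta_{\Hat{g}_r}w$: it involves \emph{second} tangential derivatives of $w$, and $C^1$ regularity up to the boundary gives no bound on those. For instance $w(x,r)=(|x|^2+r^2)^{3/4}$ is $C^1$ up to $r=0$ and smooth for $r>0$, but $\Delta_{\Hat g}w(0,r)\sim r^{-1/2}$ is unbounded. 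Your subsequent one-dimensional argument therefore controls only the $r\,\partial_r^2 w$ part; the sequence it produces has no reason to make $r\,\Delta_{\Hat{g}}w$ small, and the conclusion does not follow. (As a minor point, the sign argument in the 1D step is fine: a continuous $\partial_r^2 w$ with $|r\,\partial_r^2 w|$ bounded away from $0$ cannot change sign, so the double integration contradiction goes through for that piece.)

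The paper avoids this issue entirely by not separating the Laplacian into pieces. It applies the divergence theorem on a sequence of balls $B_k=B(q_k,r_k)$ with $q_k=(0,2r_k)$ in normal boundary coordinates, turning $\int_{B_k}\Delta_{\overline{g}}w\,dV_{\overline{g}}$ into the flux $\int_{\partial B_k}\langle\grad_{\overline{g}}w,\nu\rangle\,dS_{\overline{g}}$. Because $\grad_{\overline{g}}w$ is continuous up to $\partial M$ (that is exactly what $C^1$ gives), it is nearly constant on $B_k$, and the flux of a constant vector field through a small sphere vanishes after rescaling $r_k^{-2}\psi_k^*\overline{g}\to g_{\mathbb{E}}$. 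This yields $\frac{r_k}{\Vol(B_k)}\int_{B_k}\Delta_{\overline{g}}w\to 0$, and a mean-value selection produces the desired $p_k$. The key structural difference is that the paper's argument only ever touches \emph{first} derivatives of $w$ (via the flux integrand), which is precisely the quantity $C^1$ controls, whereas your decomposition forces you to confront second tangential derivatives that $C^1$ does not control. To salvage your approach you would need a separate argument that $r\,\Delta_{\Hat g}w\to 0$ along a common subsequence, and no such argument is supplied; it is cleaner to adopt the integral-average route.
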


\begin{proof}
	By shrinking $\mathcal{U}$ if necessary, we may assume that $U=\mathcal{U}\cap\bdry M$ admits a
	coordinate system $(x^1,\dots,x^m)$ and $\mathcal{U}$ is identified with
	$B(0,R)\times[0,R)\subset\mathbb{R}^m\times[0,r^*)$ by these coordinates and $r$.
	Moreover, it suffices to assume that $p$ corresponds to the origin $0\in B(0,R)$ and
	$\tensor{\Hat{g}}{_a_b}(p)=\tensor{\delta}{_a_b}$ is the identity matrix.
	For $k\ge 1$, let $q_k=(0,\dots,0,2r_k)$, where $0<r_k<R/3$ and $r_k\to 0$, and $B_k=B(q_k,r_k)$.
	Let $\nu_{\overline{g}}$ be the unit outward vector field along $\bdry B_k$ with respect to $\overline{g}$ and
	$dS_{\overline{g}}$ the volume density on $\bdry B_k$ induced by $\overline{g}$.
	Then by the divergence theorem,
	\begin{equation*}
		\frac{1}{r_k^m}\int_{B_k}(\Delta_{\overline{g}}w)dV_{\overline{g}}
		= \frac{1}{r_k^m}\int_{\bdry B_k}
		\braket{\grad_{\overline{g}}w,\nu_{\overline{g}}}_{\overline{g}}dS_{\overline{g}}.
	\end{equation*}
	Viewing the gradient $\grad_{\overline{g}}w$ of $w$ with respect $\overline{g}$ 
    as an $\mathbb{R}^m$-valued function on $\bdry B_k$,
	we decompose the integral as follows, where $\gamma=\grad_{\overline{g}}w(p)$:
	\begin{equation*}
		\frac{1}{r_k^m}\int_{\bdry B_k}
		\braket{\grad_{\overline{g}}w,\nu_{\overline{g}}}_{\overline{g}}dS_{\overline{g}}
		=\frac{1}{r_k^m}\int_{\bdry B_k}
		\braket{\gamma,\nu_{\overline{g}}}_{\overline{g}}dS_{\overline{g}}
		+\frac{1}{r_k^m}\int_{\bdry B_k}
		\braket{\grad_{\overline{g}}w-\gamma,\nu_{\overline{g}}}_{\overline{g}}
		dS_{\overline{g}}.
	\end{equation*}
	Since $\grad_{\overline{g}}w$ is continuous up to $\mathcal{U}\cap\bdry M$,
	the second term in the right-hand side tends to $0$ as $k\to\infty$.
	To compute the first term, let $B\subset\mathbb{R}^{m+1}$ be the unit ball and
	$\psi_k\colon\overline{B}\longrightarrow\overline{B}_k$ be the mapping $x\mapsto q_k+r_kx$. Then,
	$r_k^{-2}\psi_k^*\overline{g}$ converges to the Euclidean metric $g_{\mathbb{E}}$ uniformly on $\overline{B}$.
	Therefore,
	\begin{equation*}
		\begin{split}
			\frac{1}{r_k^m}\int_{\bdry B_k}\braket{\gamma,\nu_{\overline{g}}}_{\overline{g}}dS_{\overline{g}}
			&=\frac{1}{r_k^m}\int_{\partial B}
			\braket{r_k^{-1}\gamma,\nu_{\psi_k^*\overline{g}}}_{\psi_k^*\overline{g}}
			dS_{\psi_k^*\overline{g}}
			=\int_{\partial B}
			\braket{\gamma,\nu_{r_k^{-2}\psi_k^*\overline{g}}}_{r_k^{-2}\psi_k^*\overline{g}}
			dS_{r_k^{-2}\psi_k^*\overline{g}}\\
			&\to \int_{\partial B}\braket{\gamma,\nu_{g_{\mathbb{E}}}}_{g_{\mathbb{E}}}dS_{g_{\mathbb{E}}}=0.
		\end{split}
	\end{equation*}
	Hence
	\begin{equation*}
		\frac{r_k}{\Vol(B_k)}\int_{B_k}(\Delta_{\overline{g}}w)dV_{\overline{g}}\to 0,
	\end{equation*}
	which implies that we can choose $p_k\in B_k$ for each $k$
	so that $r_k(\Delta_{\overline{g}}w)(p_k)\to 0$. Since $r(p_k)<3r_k$, the lemma follows.
\end{proof}

Any coordinate neighborhood $(U;x^a)=(U;x^1,\dots,x^m)$ of $\bdry M$
gives rise to a coordinate neighborhood $(\mathcal{U};x^1,\dots,x^m,r)$,
where $\mathcal{U}=U\times[0,r_0)\subset\overline{M}$ for some $r_0<r^*$.
Such a $(\mathcal{U};x^1,\dots,x^m,r)$ will be called a \emph{normal boundary coordinate neighborhood}
of $\overline{M}$.
We also write $\mathring{\mathcal{U}}=U\times(0,r_0)$, and introduce
the following notation for functions $\varphi$ defined in $\mathring{\mathcal{U}}$:
\begin{align*}
	\varphi=O(r^l)
	&\overset{\mathrm{def}}{\Longleftrightarrow}
	\text{$r^{-l}\varphi$ is uniformly bounded as $r\to 0$ on any compact subset of $U$},\\
	\varphi=o(r^l)
	&\overset{\mathrm{def}}{\Longleftrightarrow}
	\text{$r^{-l}\varphi$ uniformly converges to $0$ as $r\to 0$ on any compact subset of $U$}.
\end{align*}
The Christoffel symbols of $g$ in $\mathring{\mathcal{U}}$ are given in terms of those of $\overline{g}$ by
\begin{subequations}
	\label{eq:Christoffel_symbols}
\begin{equation}
	\tensor{\smash{\fourIdx{g}{}{}{}{\Gamma}}}{^k_i_j}
	=\tensor{\smash{\fourIdx{\overline{g}}{}{}{}{\Gamma}}}{^k_i_j}+\tensor{X}{^k_i_j},
\end{equation}
where
\begin{equation}
	\tensor{X}{^k_i_j}=-\frac{1}{r}(\tensor{\delta}{_i^k}\tensor{\delta}{_j^\infty}
		+\tensor{\delta}{_j^k}\tensor{\delta}{_i^\infty}
		-\tensor{\overline{g}}{_i_j}\tensor{\overline{g}}{^k^l}\tensor{\delta}{_l^\infty}).
\end{equation}
\end{subequations}
Here $\tensor{\overline{g}}{^k^l}$ denotes the inverse of the metric $\overline{g}$
and the index $\infty$ denotes the $r$-direction.
The indices $i$, $j$, $k$, $l$ are running $\set{1,\dots,m,\infty}$.
The Christoffel symbols of $h$ admit the similar expression.

Let $\mathcal{U}=U\times[0,r_0)$ and $\mathcal{V}=V\times[0,\rho_0)$ be normal boundary coordinate neighborhoods
of $\overline{M}$ and $\overline{N}$, respectively,
and suppose that $u\in C^1(\mathcal{U},\mathcal{V})\cap C^2(\mathring{\mathcal{U}},\mathring{\mathcal{V}})$.
We write $u=(u^\alpha,\rho)=(u^1,\dots,u^n,\rho)$.
Then by \eqref{eq:Christoffel_symbols} and the similar expression for $\fourIdx{h}{}{}{}{\Gamma}$,
the components of the tension field $\tau=\tr_g\nabla du$ can be computed.
For later convenience, we write down the formulae of $r^{-1}\tau^\alpha$ and $r^{-1}\tau^\infty$:
\begin{subequations}
	\label{eq:tension_divided}
\begin{gather}
	\label{eq:tension_tangential_divided}
	r^{-1}\tau^\alpha
	=r\tau_{\overline{g},\overline{h}}^\alpha
	-(m-1)\frac{\partial u^\alpha}{\partial r}
	-\frac{2r}{\rho}\braket{d\rho,du^\alpha}_{\overline{g}}+O(r)+\frac{O(r^3)}{\rho},\\
	\label{eq:tension_normal_divided}
	r^{-1}\tau^\infty
	=r\tau_{\overline{g},\overline{h}}^\infty
	-(m-1)\frac{\partial\rho}{\partial r}
	-\frac{r}{\rho}\left(\abs{d\rho}^2_{\overline{g}}
	-\sum_{\alpha,\beta}\Hat{h}_{\alpha\beta}\braket{du^\alpha,du^\beta}_{\overline{g}}\right)
	+O(r)+\frac{O(r^3)}{\rho}.
\end{gather}
\end{subequations}
Here $\tau_{\overline{g},\overline{h}}$ is the tension field of $u$ with respect to
$\overline{g}$ and $\overline{h}$.
Based on these formulae, we determine the Neumann data of a harmonic mapping $u$.

\begin{lem}
	\label{lem:Neumann_data}
	Let $u\in C^1(\mathcal{U},\mathcal{V})\cap C^2(\mathring{\mathcal{U}},\mathring{\mathcal{V}})$
	satisfy $u(U)\subset V$.
	Suppose that its restriction $f=u|_U$ has nowhere vanishing differential, and
	let $\Hat{e}(f)$ be its energy density with respect to $\Hat{g}$ and $\Hat{h}$.
	Then, the tension field satisfies
	$\abs{\tau}=o(1)$ if and only if $\abs{\tau_{\overline{g},\overline{h}}}_{\overline{h}}=o(r^{-1})$ and
	\begin{equation}
		\label{eq:boundary_value_of_harmonic_map}
		\left.\frac{\partial u^\alpha}{\partial r}\right|_U=0,\quad\text{$\alpha=1,$ $\dots,$ $n$},
		\qquad\text{and}\qquad
		\left.\frac{\partial\rho}{\partial r}\right|_U=\sqrt{\frac{\Hat{e}(f)}{m}}.
	\end{equation}
\end{lem}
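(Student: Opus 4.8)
The plan is to extract the boundary behavior of $u$ directly from the two displayed formulae \eqref{eq:tension_tangential_divided} and \eqref{eq:tension_normal_divided}, using Lemma~\ref{lem:Laplacian_estimate_of_C1_function} as the key device for controlling the second-order term $\tau_{\overline{g},\overline{h}}$ along a sequence approaching the boundary. First I would observe that the two conditions on the right-hand side are easily seen to be sufficient: if $\partial u^\alpha/\partial r|_U = 0$ and $\partial\rho/\partial r|_U = \sqrt{\Hat{e}(f)/m}$, then since $u$ is $C^1$ up to $\mathcal{U}$ and $f$ has nonvanishing differential (so $\rho/r$ is bounded above and below near $U$), every term on the right-hand side of \eqref{eq:tension_divided} except possibly $r\tau_{\overline{g},\overline{h}}$ is $o(1)$; the $-(m-1)\partial u^\alpha/\partial r$ and $-(m-1)\partial\rho/\partial r$ terms combine with the $-\frac{2r}{\rho}\braket{d\rho,du^\alpha}$ and $-\frac{r}{\rho}(|d\rho|^2_{\overline{g}} - \sum\Hat{h}_{\alpha\beta}\braket{du^\alpha,du^\beta})$ terms to vanish on $U$, hence are $o(1)$ nearby by continuity. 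So $|\tau| = o(1)$ is equivalent to $|\tau_{\overline{g},\overline{h}}|_{\overline{h}} = o(r^{-1})$, giving one direction.

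For the converse, suppose $|\tau| = o(1)$. I would first argue that $r\tau_{\overline{g},\overline{h}}^\alpha$ and $r\tau_{\overline{g},\overline{h}}^\infty$ cannot be controlled pointwise up to the boundary, but that we can choose good sequences of points. For each fixed $p \in U$ and each coordinate component, apply Lemma~\ref{lem:Laplacian_estimate_of_C1_function} to the $C^1\cap C^2$ functions $u^\alpha$ and $\rho$ (these are $C^1$ on $\mathcal{U}$ and $C^2$ in the interior), obtaining a sequence $p_k \to p$ along which $r\Delta_{\overline{g}}u^\alpha(p_k) \to 0$ and $r\Delta_{\overline{g}}\rho(p_k) \to 0$. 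Since $\tau_{\overline{g},\overline{h}}^\gamma = \Delta_{\overline{g}}u^\gamma + (\text{first-derivative terms involving }\fourIdx{\overline{h}}{}{}{}{\Gamma})$ and the first-derivative terms are bounded (as $u$ is $C^1$, with image staying in a fixed coordinate patch), we get $r\tau_{\overline{g},\overline{h}}^\alpha(p_k) \to 0$ and $r\tau_{\overline{g},\overline{h}}^\infty(p_k) \to 0$. Evaluating \eqref{eq:tension_tangential_divided} at $p_k$ and letting $k\to\infty$ then forces $(m-1)\partial u^\alpha/\partial r(p) = 0$, hence $\partial u^\alpha/\partial r|_U = 0$ (here $m \ge 1$; if $m = 1$ this term is absent and the conclusion $\partial u^\alpha/\partial r|_U = 0$ will instead drop out of the normal equation or be vacuous — I would need to check the $m=1$ case separately, but the structure persists). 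Similarly, evaluating \eqref{eq:tension_normal_divided} at $p_k$, and using $\partial u^\alpha/\partial r|_U = 0$ so that $\braket{du^\alpha, du^\beta}_{\overline{g}}|_U$ reduces to the tangential inner product $\Hat{g}^{ab}\partial_a u^\alpha \partial_b u^\beta$ (because $\overline{g} = dr^2 + \Hat{g} + O(r^2)$), we find
\begin{equation*}
	0 = -(m-1)\frac{\partial\rho}{\partial r}\bigg|_U - \frac{r}{\rho}\bigg|_U\left(\left(\frac{\partial\rho}{\partial r}\right)^2 - \Hat{e}(f)\cdot(\text{weight})\right).
\end{equation*}
Being careful with the precise normalization — $\Hat{e}(f) = \frac{1}{2}\Hat{g}^{ab}\Hat{h}_{\alpha\beta}\partial_a f^\alpha \partial_b f^\beta$ or without the $\frac12$, and the behavior of $r/\rho$ as $r\to 0$ — this algebraic identity together with the requirement $\partial\rho/\partial r|_U > 0$ (which holds since $\rho/r$ is bounded below by $C^1$-ness and nonvanishing $df$) pins down $\partial\rho/\partial r|_U = \sqrt{\Hat{e}(f)/m}$.

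The main obstacle is the bookkeeping in the normal equation \eqref{eq:tension_normal_divided}: one must track exactly how $r/\rho$ behaves (it tends to $(\partial\rho/\partial r|_U)^{-1}$, which is itself the unknown), how the $O(r^2)$ error in $\overline{g} = dr^2 + \Hat{g} + O(r^2)$ interacts with $|d\rho|^2_{\overline{g}}$ and $\braket{du^\alpha,du^\beta}_{\overline{g}}$, and ensure the $\frac{O(r^3)}{\rho} = \frac{O(r^3)}{r\cdot(\rho/r)}$ terms are genuinely $o(1)$. A secondary subtlety is that Lemma~\ref{lem:Laplacian_estimate_of_C1_function} only gives a sequence $p_k \to p$, not uniform convergence, so the final conclusions about $\partial u^\alpha/\partial r$ and $\partial\rho/\partial r$ are established pointwise on $U$; since these are continuous functions on $U$ this is enough, but one should state it carefully. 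Once these boundary identities hold, continuity of all terms gives $|\tau_{\overline{g},\overline{h}}|_{\overline{h}} = o(r^{-1})$ from $|\tau| = o(1)$, closing the equivalence.
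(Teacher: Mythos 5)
There is a genuine gap in the hard direction, and it centers on the order in which you treat the two formulae and on one term you have dropped.

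Your plan is to read off $\partial u^\alpha/\partial r|_U = 0$ from \eqref{eq:tension_tangential_divided} first, and you identify the coefficient of that derivative as $-(m-1)$. But the term $-\tfrac{2r}{\rho}\braket{d\rho,du^\alpha}_{\overline g}$ also contributes: on $U$ the tangential part $\Hat g^{ab}\partial_a\rho\,\partial_b u^\alpha$ vanishes (since $\rho|_U\equiv 0$), so $\braket{d\rho,du^\alpha}_{\overline g}\to\kappa(p)\,\partial_r u^\alpha(p)$, and if $\kappa(p)>0$ then $\tfrac{r}{\rho}\to 1/\kappa(p)$, so the whole term contributes $-2\,\partial_r u^\alpha(p)$. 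The total coefficient is therefore $-(m+1)$, not $-(m-1)$, and this works uniformly for all $m\ge 1$. Your side remark that for $m=1$ the conclusion ``will instead drop out of the normal equation or be vacuous'' is incorrect on both counts: \eqref{eq:equation_for_Neumann} contains $\partial_r u^\alpha$ only through the nonnegative quadratic form $\sum\Hat h_{\alpha\beta}\partial_r u^\alpha\partial_r u^\beta$ and cannot by itself force $\partial_r u^\alpha=0$.

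More seriously, that limit computation in the tangential equation needs $\kappa(p)>0$ in advance — without it $r/\rho$ is not known to have a finite limit, nor is the error term $O(r^3)/\rho$ controlled. The paper therefore inverts your order: it first multiplies \eqref{eq:tension_normal_divided} by $\rho/r$ (which is always bounded since $u\in C^1$ and $\rho|_U=0$, so no sign issue arises), passes to the limit along the sequence $p_k$ supplied by Lemma~\ref{lem:Laplacian_estimate_of_C1_function} applied to $\rho$, and obtains the algebraic relation $0=m\kappa^2-\Hat e(f)-\sum\Hat h_{\alpha\beta}\partial_r u^\alpha\partial_r u^\beta$. Since $\Hat e(f)>0$ and $\Hat h$ is positive definite, this already forces $\kappa>0$. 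Only then is \eqref{eq:tension_tangential_divided} used, giving $-(m+1)\partial_r u^\alpha(p)=0$, after which one substitutes back into the normal relation to pin down $\kappa=\sqrt{\Hat e(f)/m}$. You correctly identify Lemma~\ref{lem:Laplacian_estimate_of_C1_function} and the decomposition $\tau_{\overline g,\overline h}=\Delta_{\overline g}(\cdot)+O(1)$ as the engine, and your sufficiency direction is fine; but without first extracting $\kappa>0$ from the normal component, the tangential step as you have written it does not go through.
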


\begin{proof}
	Suppose that $\abs{\tau}=o(1)$, which is equivalent to $r^{-1}\tau^\alpha=o(1)$ and $r^{-1}\tau^\infty=o(1)$,
	and let $p\in U$ be arbitrary.
	Since $u^1$, $\dots$, $u^n$, and $\rho$ have continuous derivatives,
	\begin{equation*}
		\tau_{\overline{g},\overline{h}}^\infty=\Delta_{\overline{g}}\rho+O(1).
	\end{equation*}
	Then, by Lemma \ref{lem:Laplacian_estimate_of_C1_function}, there exists a sequence
	$\set{p_k}$ of points on $\mathring{\mathcal{U}}$ for which
	$p_k\to p$ and
	\begin{equation*}
		r(p_k)\cdot\tau_{\overline{g},\overline{h}}^\infty(p_k)\to 0\quad\text{as $k\to\infty$}.
	\end{equation*}
	Let $\partial\rho/\partial r=\kappa\ge 0$ on $U$. By multiplying \eqref{eq:tension_normal_divided}
	by $\rho/r$ and taking the limit of the values at $p_k$, we get
	\begin{equation}
		\label{eq:equation_for_Neumann}
		0=m\kappa(p)^2-\Hat{e}(f)(p)
		-\sum_{\alpha,\beta}\Hat{h}_{\alpha\beta}(p)
		\frac{\partial u^\alpha}{\partial r}(p)\frac{\partial u^\beta}{\partial r}(p).
	\end{equation}
	Since $\Hat{e}(f)(p)>0$,
	this in particular implies that $\kappa(p)\not=0$, which combined with
	\eqref{eq:tension_tangential_divided} shows that $(\partial u^\alpha/\partial r)(p)=0$.
	Thus we conclude from \eqref{eq:equation_for_Neumann} that
	$\kappa(p)=\sqrt{\Hat{e}(f)(p)/m}$, and this is true for any $p$.
	Now it follows from \eqref{eq:tension_tangential_divided} and \eqref{eq:tension_normal_divided} that
	$\abs{\tau_{\overline{g},\overline{h}}}_{\overline{h}}=o(r^{-1})$.
	The converse is clear.
\end{proof}

We remark that this lemma has the following consequence.

\begin{cor}
	\label{cor:properness_energy_density}
	Assume that $u\in C^1(\overline{M},\overline{N})\cap C^2(M,N)$ satisfies $u(\bdry M)\subset\bdry N$,
	the differential of $f=u|_{\bdry M}$ is nowhere vanishing, and $\abs{\tau}=o(1)$.
	Then $u$ is proper and
	the energy density $e(u)$ of $u$ with respect to $g$ and $h$ converges to $m+1$ uniformly as $r\to 0$.
\end{cor}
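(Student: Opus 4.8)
The plan is to read this off from Lemma~\ref{lem:Neumann_data} with only modest extra work. \emph{Properness} is in fact a soft matter that uses neither curvature assumption nor $\abs{\tau}=o(1)$: since $u$ is continuous on the compact space $\overline{M}$ and sends $\bdry M$ into $\bdry N$, for every compact $K\subset N$ the set $u^{-1}(K)$ is closed in $\overline{M}$ and disjoint from $\bdry M$ (because $K\cap\bdry N=\emptyset$), hence a compact subset of $M$; thus $u\colon M\to N$ is proper.

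For the energy density I would localize. Fix a normal boundary coordinate neighborhood $\mathcal{U}=U\times[0,r_0)$ of $\overline{M}$, shrunk so that $u$ maps it into a normal boundary coordinate neighborhood $\mathcal{V}$ of $\overline{N}$, and write $u=(u^\alpha,\rho)=(u^1,\dots,u^n,\rho)$. Since $\abs{\tau}=o(1)$ and $df$ is nowhere vanishing, Lemma~\ref{lem:Neumann_data} applies and gives $\partial u^\alpha/\partial r|_U=0$ for every $\alpha$ and $\partial\rho/\partial r|_U=\kappa$, where $\kappa:=\sqrt{\Hat{e}(f)/m}$ is continuous and strictly positive on $U$. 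This has two consequences: first, $\rho=\rho\circ u>0$ on $\mathring{\mathcal{U}}$ once $r_0$ is small enough, so $e(u)$ is defined there; second, since $\rho$ is $C^1$ up to $\bdry M$, vanishes identically on $\bdry M$ (so its tangential derivatives there vanish as well) and has normal derivative $\kappa$, Taylor's theorem yields $\rho=\kappa r+o(r)$ uniformly on compact subsets of $U$, whence $(r/\rho)^2\to\kappa^{-2}$ uniformly.

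Next I would rewrite $e(u)$ via the compactified metrics: for any symmetric $2$-tensor $T$ on $M$ one has $\tr_g T=r^2\tr_{\overline{g}}T$, and $u^*h=(\rho\circ u)^{-2}u^*\overline{h}$, so
\begin{equation*}
	e(u)=\tr_g(u^*h)=\frac{r^2}{\rho^2}\,\tr_{\overline{g}}(u^*\overline{h}),
\end{equation*}
where $\tr_{\overline{g}}(u^*\overline{h})=\overline{g}^{ij}\overline{h}_{\alpha\beta}\,\partial_i u^\alpha\partial_j u^\beta$ (all indices, including the $r$- and $\rho$-directions) is continuous up to $\bdry M$. I would then evaluate it on $\bdry M$ using \eqref{eq:asymptotic_Einstein_rephrased} and \eqref{eq:normalization_of_h}, which give $\overline{g}|_{\bdry M}=dr^2+\Hat{g}$ and $\overline{h}|_{\rho=0}=d\rho^2+\Hat{h}$, together with the boundary $1$-jet of $u$ from Lemma~\ref{lem:Neumann_data} (the $du^\alpha$ have no $dr$-component and restrict to $df^\alpha$, while $d\rho|_{\bdry M}=\kappa\,dr$). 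Plugging in gives
\begin{equation*}
	\tr_{\overline{g}}(u^*\overline{h})\big|_{\bdry M}=\kappa^2+\Hat{e}(f)=\frac{\Hat{e}(f)}{m}+\Hat{e}(f)=\frac{m+1}{m}\,\Hat{e}(f),
\end{equation*}
and combining the two uniform limits yields $e(u)\to\kappa^{-2}\cdot\tfrac{m+1}{m}\Hat{e}(f)=m+1$ uniformly on compact subsets of $U$; covering $\bdry M$ by finitely many such neighborhoods gives $e(u)\to m+1$ uniformly as $r\to 0$.

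The step deserving the most care is the last boundary-value computation: it uses \emph{both} conclusions of Lemma~\ref{lem:Neumann_data} and the vanishing of the tangential derivatives of $\rho$ along $\bdry M$, and it is exactly there that the a priori non-constant factor $\Hat{e}(f)$ cancels against the $\kappa^{-2}$ coming from $(r/\rho)^2$, leaving the clean constant $m+1$. Apart from this and the routine bookkeeping of the uniform estimates, I do not anticipate any real difficulty.
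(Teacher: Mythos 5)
Your proof is correct, and since the paper leaves the corollary as an unproved consequence of Lemma~\ref{lem:Neumann_data}, your argument is essentially the one the authors intend: properness is the soft topological point (preimages of compacta avoid $\bdry M$, hence are compact in the compact space $\overline{M}$), and the boundary value computation of $e(u)=\tfrac{r^2}{\rho^2}\tr_{\overline{g}}(u^*\overline{h})$ using the Neumann data \eqref{eq:boundary_value_of_harmonic_map}, the normalizations \eqref{eq:asymptotic_Einstein_rephrased}--\eqref{eq:normalization_of_h}, and $\rho/r\to\kappa$ delivers the cancellation $\kappa^{-2}\cdot\tfrac{m+1}{m}\Hat{e}(f)=m+1$ uniformly (consistent with the paper's convention $e=\tr_g u^*h$, as one checks from \eqref{eq:equation_for_harmonic_map} and \eqref{eq:boundary_value_of_harmonic_map}).

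Wait — there is no equation labeled \verb|eq:equation_for_harmonic_map|; the correct internal check is between \eqref{eq:equation_for_Neumann} and \eqref{eq:boundary_value_of_harmonic_map}. With that substitution, the remark stands: your computation is correct and matches the paper's (unstated) intended proof.
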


Next we construct approximate harmonic maps locally.
In order to do that, we will need to extend a given function on $\bdry M$ to an approximate harmonic function
with respect to $\overline{g}$ in such a way that its derivatives are controlled.
Let $\sigma_m$ be the Euclidean volume of the unit sphere in $\mathbb{R}^{m+1}$,
and recall that the Poisson kernel of the upper-half space $\set{(x,r)\in\mathbb{R}^m\times\mathbb{R}|r>0}$ is
\begin{equation*}
	K_0(x,r;x')=c_m\frac{r}{(\abs{x-x'}^2+r^2)^{(m+1)/2}},
\end{equation*}
where $c_m=2/\sigma_m$.
Our idea is to mimic this kernel function.
If $i_{\Hat{g}}(\bdry M)$ is the injectivity radius of $(\bdry M,\Hat{g})$,
then the squared distance function $d_{\Hat{g}}(x,x')^2$ is smooth in
$\set{d_{\Hat{g}}(x,x')<i_{\Hat{g}}(\bdry M)}\subset\bdry M\times\bdry M$.
Recall that the first variational formula of geodesic length implies
\begin{equation}
	\label{eq:gradient_of_distance}
	\grad d_{\Hat{g}}(x,x')=-\gamma'(0)+\gamma'(L)\in T_x\bdry M\oplus T_{x'}\bdry M
\end{equation}
in $\set{d_{\Hat{g}}(x,x')<i_{\Hat{g}}(\bdry M)}\setminus(\mathrm{diagonal})$, where
$\gamma\colon[0,L]\longrightarrow\bdry M$ is the unit-speed minimizing geodesic from $x$ to $x'$.
Hence, in the geodesic coordinates $\xi=(\xi^a)$ centered at $x\in\bdry M$,
\begin{equation}
	\label{eq:gradient_of_squared_distance}
	\grad d_{\Hat{g}}^2(0,\xi)=-2\xi+2\xi\in T_x\bdry M\oplus T_{x'}\bdry M
\end{equation}
as far as $\abs{\xi}<i_{\Hat{g}}(\bdry M)$.
Now let $\delta=i_{\Hat{g}}(\bdry M)/2$ and we choose a smooth function
$D\colon\bdry M\times\bdry M\longrightarrow[0,\infty)$ for which
\begin{equation}
	\label{eq:modified_distance_function}
	D(x,x')
	\begin{cases}
		=d_{\Hat{g}}(x,x')^2 & \text{if $d_{\Hat{g}}(x,x')<\delta$},\\
		\ge \delta^2 & \text{if $d_{\Hat{g}}(x,x')\ge\delta$}.
	\end{cases}
\end{equation}
Then note that
\begin{equation}
	\label{eq:gradient_D}
	\abs{\grad_{\Hat{g}}D(x,x')}_{\Hat{g}}^2=4D(x,x') \qquad\text{near the diagonal}
\end{equation}
and
\begin{equation}
	\label{eq:Laplacian_D}
	\Delta_{\Hat{g}}D(x,x')=2m+O(D(x,x')) \qquad\text{as $(x,x')$ tends to the diagonal},
\end{equation}
where $\grad_{\Hat{g}}$ and $\Delta_{\Hat{g}}$ apply to the $x$ variable.

We define the kernel function $K(x,r;x')$ on $(\bdry M\times(0,r^*))\times\bdry M$ by
\begin{equation}
	\label{eq:approximate_Poisson_kernel}
	K(x,r;x')=c_m\frac{r}{(D(x,x')+r^2)^{(m+1)/2}}.
\end{equation}
This is smooth everywhere in $(\bdry M\times(0,r^*))\times\bdry M$.

\begin{lem}
	\label{lem:approximate_Poisson_kernel}
	The function $K(x,r;x')$ satisfies the following:
	\begin{subequations}
	\begin{gather}
		\label{eq:integral_of_K}
		\lim_{r\to 0}\int_{\bdry M}K(x,r;x')dV_{\Hat{g}}(x')=1,\\
		\label{eq:integral_of_grad_K}
		\lim_{r\to 0}r\Abs{\int_{\bdry M}\grad_{\overline{g}}K(x,r;x')dV_{\Hat{g}}(x')}_{\overline{g}}=0,\\
		\label{eq:integral_of_Laplacian_K}
		\lim_{r\to 0}r\int_{\bdry M}\Delta_{\overline{g}}K(x,r;x')dV_{\Hat{g}}(x')=0.
	\end{gather}
	\end{subequations}
	All convergences are uniform in $x\in\bdry M$. Moreover, there exists a constant $C>0$ such that
	$\abs{r\grad_{\overline{g}}K(x,r;x')}_{\overline{g}}\le CK(x,r;x')$ and
	$\abs{\Delta_{\overline{g}}K(x,r;x')}_{\overline{g}}\le CK(x,r;x')$.
\end{lem}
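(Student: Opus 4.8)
The strategy is to compare $K(x,r;x')$ with the Euclidean Poisson kernel $K_0(\xi,r)=c_m r(|\xi|^2+r^2)^{-(m+1)/2}$ of the upper half space. The key observation is that, fixing $x'\in\bdry M$ and taking geodesic normal coordinates $\xi=(\xi^a)$ for $(\bdry M,\Hat g)$ centered at $x'$, one has $D(x,x')=|\xi|^2$ whenever $d_{\Hat g}(x,x')<\delta$, so that in the associated normal boundary coordinate chart $(\xi^a,r)$ for $\overline M$ the function $(x,r)\mapsto K(x,r;x')$ coincides, near the diagonal, with $K_0$, which is harmonic for the flat metric $g_{\mathbb{E}}=dr^2+\sum_a(d\xi^a)^2$. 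By Lemma \ref{lem:CDLS} we may take $\overline g=dr^2+g_r$ on a collar of $\bdry M$, with no $dr\,dx^a$ cross terms and $g_r=\Hat g+O(r^2)$; since $\overline g$ is $C^2$ and $g_r=\Hat g$ on $\{r=0\}$, this forces $\partial_r g_r=O(r)$ and $\partial_\xi(g_r-\Hat g)=O(r)$. Writing $t=\sqrt{D(x,x')+r^2}$ and using also $\Hat g_{ab}=\delta_{ab}+O(|\xi|^2)$ in normal coordinates, one finds at the point of evaluation that $\overline g^{ab}-\delta^{ab}=O(t^2)$, $\overline g^{ij}\Gamma^r_{ij}=-\tfrac12 g_r^{ab}\partial_r g_{r,ab}=O(r)$ and $\overline g^{ij}\Gamma^a_{ij}=g_r^{bc}\Gamma^a_{bc}=O(t)$, where $\Gamma$ are the Christoffel symbols of $\overline g$. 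Together with the elementary bounds $|\partial_r K_0|\le(m+1)K_0/r$, $|\partial_{\xi^a}K_0|\le(m+1)K_0/t$ and $|\partial_{\xi^a}\partial_{\xi^b}K_0|\le C_m K_0/t^2$, these are all we shall need.

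I would first establish the two pointwise estimates, since the three limits build on them. For $d_{\Hat g}(x,x')<\delta$, work in the chart above; then $|r\grad_{\overline g}K|_{\overline g}^2\le Cr^2\bigl((\partial_r K_0)^2+\sum_a(\partial_{\xi^a}K_0)^2\bigr)\le CK_0^2$, and, since $\Delta_{g_{\mathbb{E}}}K_0=0$,
\[
	\Delta_{\overline g}K=\Delta_{\overline g}K_0-\Delta_{g_{\mathbb{E}}}K_0
	=(\overline g^{ab}-\delta^{ab})\,\partial_{\xi^a}\partial_{\xi^b}K_0
	-\overline g^{ij}\Gamma^r_{ij}\,\partial_r K_0
	-\overline g^{ij}\Gamma^a_{ij}\,\partial_{\xi^a}K_0,
\]
whose three terms are of size $O(t^2)O(K_0/t^2)$, $O(r)O(K_0/r)$ and $O(t)O(K_0/t)$, hence all $O(K_0)=O(K)$; it is crucial here that the geodesic normal form makes the $\partial_r^2$ and $\partial_r\partial_\xi$ correction terms drop out. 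For $d_{\Hat g}(x,x')\ge\delta$ we have $D\ge\delta^2$, so $K\ge c\,r$ while $\grad_{\overline g}K=O(1)$ and $\Delta_{\overline g}K=O(r)$ (again since $\partial_r g_r=O(r)$, $r$ being still small), and both bounds hold there too. Compactness of $\bdry M$ makes the estimates uniform in $x$ and $x'$.

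Granting the pointwise bounds, \eqref{eq:integral_of_Laplacian_K} is immediate, since $r\bigl|\int_{\bdry M}\Delta_{\overline g}K\,dV_{\Hat g}(x')\bigr|\le Cr\int_{\bdry M}K\,dV_{\Hat g}(x')\to0$ by \eqref{eq:integral_of_K}. For \eqref{eq:integral_of_K} I split the integral at $d_{\Hat g}(x,x')=\delta$: the outer part is $\le c_m r\,\delta^{-(m+1)}\Vol_{\Hat g}(\bdry M)\to0$, while the inner part, in normal coordinates centered at $x$ with $dV_{\Hat g}(x')=J\,d\xi'$ and $J=1+O(|\xi'|^2)$, equals $\int_{|\xi'|<\delta}K_0\,d\xi'+\int_{|\xi'|<\delta}K_0(J-1)\,d\xi'$; the rescaling $\xi'=r\eta$ shows the first term converges to $c_m\int_{\mathbb{R}^m}(|\eta|^2+1)^{-(m+1)/2}\,d\eta=1$ (the flat Poisson kernel being a probability density) and the second is $O\bigl(r\int_{|\xi'|<\delta}|\xi'|^{1-m}\,d\xi'\bigr)=O(r)$. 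For \eqref{eq:integral_of_grad_K}, differentiation under the integral gives $\int_{\bdry M}\grad_{\overline g}K\,dV_{\Hat g}(x')=\grad_{\overline g}F$ with $F(x,r)=\int_{\bdry M}K(x,r;x')\,dV_{\Hat g}(x')$, and it suffices to show $|\grad_{\overline g}F|_{\overline g}$ stays bounded as $r\to0$. Its normal component is controlled by $\int_{\bdry M}|\partial_r K|\,dV_{\Hat g}(x')\le C$, which uses $\int_{\mathbb{R}^m}(|\eta|^2+1)^{-(m+1)/2}\,d\eta<\infty$; for the tangential component, note that near the diagonal, evaluated at $x$, the vector $\grad_{\bdry M,x}K(x,r;x')$ equals $-\grad_{\xi'}K_0(\xi',r)$ in normal coordinates centered at $x$, so integration by parts on $\bdry M$ transfers the derivative onto $J$, and since the leading linear part of $\grad_{\xi'}J$ pairs to zero against the radial kernel $K_0$ the remainder is $O(r)$ (a boundary term contributing a further $O(r)$). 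Hence $|\grad_{\overline g}F|_{\overline g}=O(1)$ and $r|\grad_{\overline g}F|_{\overline g}\to0$, uniformly in $x$.

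The step I expect to be the main obstacle is the pointwise Laplacian bound $|\Delta_{\overline g}K|\le CK$: a naive term-by-term estimate of $\Delta_{\overline g}K$ only gives $O(K/r^2)$, and one genuinely needs the exact cancellation coming from $\Delta_{g_{\mathbb{E}}}K_0=0$. Exploiting that cancellation is precisely what forces the use of the geodesic normal form of $\overline g$ supplied by Lemma \ref{lem:CDLS}, so that the second-order correction terms disappear and the surviving coefficients become $O(r)$ or $O(t)$, together with careful bookkeeping of the two distinct scales $r$ and $t=\sqrt{D(x,x')+r^2}$ in the derivative estimates for $K_0$. With that bound in hand, the remaining assertions are, as sketched above, essentially routine.
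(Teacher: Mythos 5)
Your overall strategy — reduction to the flat Poisson kernel $K_0$ in geodesic normal coordinates, rescaling $\xi'=r\eta$, and exploiting $\Delta_{g_{\mathbb{E}}}K_0=0$ for the Laplacian bound — is the same as the paper's. Your route to the pointwise estimate $\abs{\Delta_{\overline g}K}\le CK$ is a genuine variant: you pass to geodesic coordinates centered at $x'$ so that $K$ coincides with $K_0$ \emph{exactly} near the diagonal and compare the two Laplace operators term by term, whereas the paper stays in coordinates centered at $x$, computes $\partial_r^2K+\Delta_{\Hat g}K$ explicitly using \eqref{eq:gradient_D} and \eqref{eq:Laplacian_D}, and verifies that the four leading contributions in \eqref{eq:Laplacian_of_K} cancel. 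The cancellation is the same in both cases; your version makes it transparent where it comes from, at the cost of requiring the geodesic-normal-form statement $\overline g=dr^2+g_r$ (with no $dr\,d\xi^a$ cross terms), which is what the CDLS construction does provide although Lemma~\ref{lem:CDLS} as stated records only the $C^0$ estimate.

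There is, however, a genuine gap in your treatment of \eqref{eq:integral_of_grad_K}, in the estimate of the normal component. You claim $\int_{\bdry M}\abs{\partial_r K}\,dV_{\Hat g}(x')\le C$, ``which uses $\int_{\mathbb{R}^m}(\abs\eta^2+1)^{-(m+1)/2}\,d\eta<\infty$.'' This is false: $\abs{\partial_r K}\le C(D+r^2)^{-(m+1)/2}$, and the rescaling $\xi'=r\eta$ produces
\begin{equation*}
	\int_{\abs{\xi'}<\delta}\abs{\partial_r K_0(\xi',r)}\,d\xi'
	=\frac{1}{r}\int_{\abs\eta<\delta/r}
	\Abs{\frac{1}{(\abs\eta^2+1)^{(m+1)/2}}-\frac{m+1}{(\abs\eta^2+1)^{(m+3)/2}}}\,d\eta,
\end{equation*}
which tends to $c/r$ for a strictly positive constant $c$. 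So $\int\abs{\partial_r K}\,dV=O(r^{-1})$ and is sharply of that order; multiplying by $r$ only gives $O(1)$, not $o(1)$, so this bound is exactly one order too weak to conclude. The normal component needs the same cancellation you correctly invoked for the tangential one: $\int_{\mathbb{R}^m}\partial_r K_0(\xi',r)\,d\xi'=\partial_r\!\int K_0\,d\xi'=\partial_r(1)=0$ for every $r>0$, so the only contributions are the tail over $\set{d_{\Hat g}(x,x')\ge\delta}$ and the density correction $J-1=O(\abs{\xi'}^2)$, both of which integrate against $\partial_r K_0$ to $O(1)$ (in fact the tail is $O(1)$ and the $J-1$ term is $O(\delta)$ after rescaling). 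This is precisely how the paper argues — it ``follows from the fact that the gradient of $K_0$ integrates to $0$ over $\mathbb{R}^m$'' — and your proof needs to say the same. The rest of the proposal (the two pointwise bounds, the limit \eqref{eq:integral_of_K}, and the deduction of \eqref{eq:integral_of_Laplacian_K} from \eqref{eq:integral_of_K}) is correct.
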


\begin{proof}
	Let $\delta=i_{\Hat{g}}(\bdry M)/2$.
	For any fixed $x\in\bdry M$, let $B=B_{\Hat{g}}(x,\delta)\subset\bdry M$ be the geodesic ball
	and we decompose the first integral as
	\begin{equation*}
		\int_{\bdry M}K(x,r;x')dV_{\Hat{g}}(x')
		=\int_BK(x,r;x')dV_{\Hat{g}}(x')+\int_{\bdry M\setminus B}K(x,r;x')dV_{\Hat{g}}(x').
	\end{equation*}
	Since $x'\in\bdry M\setminus B$ implies $K(x,r;x')\le c_mr/\delta^{m+1}$,
	the integral over $\bdry M\setminus B$ is $O(r)$.
	So it suffices to estimate the integral over $B$ to show \eqref{eq:integral_of_K}.
	We express it as an integral over $\set{\abs{\xi}<\delta}\subset\mathbb{R}^n$
	by introducing the geodesic coordinates centered at $x$ on $B$. Then
	\begin{equation}
		\label{eq:change_of_variable}
		\begin{split}
			\int_BK(x,r;x')dV_{\Hat{g}}(x')
			&=c_m\int_{\abs{\xi}<\delta}\frac{r}{(\abs{\xi}^2+r^2)^{(m+1)/2}}dV_{\Hat{g}}(\xi)\\
			&=c_m\int_{\abs{\xi}<r^{-1}\delta}\frac{r}{(\abs{r\xi}^2+r^2)^{(m+1)/2}}dV_{\Hat{g}}(r\xi)\\
			&=c_m\int_{\abs{\xi}<r^{-1}\delta}\frac{1}{(\abs{\xi}^2+1)^{(m+1)/2}}\frac{dV_{\Hat{g}}(r\xi)}{r^m}.
		\end{split}
	\end{equation}
	The volume density $dV_{\Hat{g}}(r\xi)/r^m$ is equivalent to the Euclidean volume
	density $dV_{g_{\mathbb{E}}}(\xi)$ uniformly in $x$ and $r$.
	Hence, for any given $\varepsilon>0$, we can take $R>0$ so that
	\begin{equation*}
		c_m\int_{R<\abs{\xi}<r^{-1}\delta}\frac{1}{(\abs{\xi}^2+1)^{(m+1)/2}}\frac{dV_{\Hat{g}}(r\xi)}{r^m}
		<\varepsilon
		\qquad\text{and}\qquad
		c_m\int_{\abs{\xi}>R}\frac{1}{(\abs{\xi}^2+1)^{(m+1)/2}}dV_{g_{\mathbb{E}}}(\xi)
		<\varepsilon
	\end{equation*}
	for $0<r<R^{-1}\delta$. On the other hand, $dV_{\Hat{g}}(r\xi)/r^m$ converges to $dV_{g_{\mathbb{E}}}(\xi)$
	uniformly on $\set{\abs{\xi}<R}\subset\mathbb{R}^m$ and uniformly in $x$.
	Therefore the last expression in \eqref{eq:change_of_variable}
	converges as $r\to 0$ to the integral of $K_0(0,1;\xi)dV_{g_{\mathbb{E}}}(\xi)$
	over $\mathbb{R}^m$, which equals $1$, uniformly in $x$.

	The second limit \eqref{eq:integral_of_grad_K} is proved in a similar way.
	Since $\overline{g}$ and $dr^2+\Hat{g}$ are quasi-equivalent, it suffices to show that
	\begin{equation*}
		\lim_{r\to 0}r\Abs{\int_{\bdry M}\grad_{\Hat{g}}K(x,r;x')dV_{\Hat{g}}(x')}_{\Hat{g}}=0\qquad\text{and}\qquad
		\lim_{r\to 0}r\int_{\bdry M}\frac{\partial K}{\partial r}(x,r;x')dV_{\Hat{g}}(x')=0,
	\end{equation*}
	where $\grad_{\Hat{g}}$ is the gradient in the $x$ variable, and that the convergences are uniform in $x$.
	The integrands are computed as follows:
	\begin{subequations}
		\label{eq:gradient_of_approximate_Poisson_kernel}
	\begin{align}
		\grad_{\Hat{g}}K(x,r;x')&=-\frac{(m+1)c_m}{2}\frac{r\grad_{\Hat{g}}D(x,x')}{(D(x,x')+r^2)^{(m+3)/2}},\\
		\frac{\partial K}{\partial r}(x,r;x')
		&=c_m\left(\frac{1}{(D(x,x')+r^2)^{(m+1)/2}}-\frac{(m+1)r^2}{(D(x,x')+r^2)^{(m+3)/2}}\right).
	\end{align}
	\end{subequations}
	As before, it suffices to consider the integrals over $B=B_{\Hat{g}}(x,\delta)$ instead of
	those over $\bdry M$.
	We introduce the geodesic coordinates centered at $x$.
	If $\xi=(\xi^a)\in\mathbb{R}^m$ is the coordinate of $x'$, then $\grad_{\Hat{g}}D(x,x')=-2\xi$ by
	\eqref{eq:gradient_of_squared_distance}. Thus it remains to show that
	\begin{equation*}
		\lim_{r\to 0}r\int_{\abs{\xi}<\delta}\frac{r\xi^a}{(\abs{\xi}^2+r^2)^{(m+3)/2}}dV_{\Hat{g}}(\xi)=0,
		\qquad a=1,\dotsc,m
	\end{equation*}
	and
	\begin{equation*}
		\lim_{r\to 0}r\int_{\abs{\xi}<\delta}
		\left(\frac{1}{(\abs{\xi}^2+r^2)^{(m+1)/2}}-\frac{(m+1)r^2}{(\abs{\xi}^2+r^2)^{(m+3)/2}}\right)
		dV_{\Hat{g}}(\xi)=0,
	\end{equation*}
	both uniformly in $x$. These follow from the fact that the gradient of $K_0(x,r;x')$ integrates to $0$
	over $\mathbb{R}^m$.
	Also, it follows from \eqref{eq:gradient_of_approximate_Poisson_kernel} and \eqref{eq:gradient_D} that
	$\abs{r\grad_{\overline{g}}K}\le CK$ for some $C>0$.
	
	To show \eqref{eq:integral_of_Laplacian_K}, it suffices to prove that
	$\abs{\Delta_{\overline{g}}K}\le CK$ for some $C>0$, which we simply write $\Delta_{\overline{g}}K=O(K)$.
	A direct computation shows that $\abs{r^2\nabla_{\overline{g}}^2K}_{\overline{g}}=O(K)$.
	By \eqref{eq:asymptotic_Einstein_rephrased}, this together with
	$\abs{r\nabla_{\overline{g}}K}_{\overline{g}}=O(K)$ implies
	\begin{equation*}
		\Delta_{\overline{g}}K= \frac{\partial^2K}{\partial r^2}+\Delta_{\Hat{g}}K+O(K).
	\end{equation*}
	Moreover,
	\begin{gather*}
		\frac{\partial^2K}{\partial r^2}
		=c_m\left(-\frac{3(m+1)r}{(D+r^2)^{(m+3)/2}}+\frac{(m+1)(m+3)r^3}{(D+r^2)^{(m+5)/2}}\right),\\
		\Delta_{\Hat{g}}K
		=c_m\left(-\frac{m+1}{2}\frac{r\Delta_{\Hat{g}}D}{(D+r^2)^{(m+3)/2}}
		+\frac{(m+1)(m+3)}{4}\frac{r\abs{\grad_{\Hat{g}}D}_{\Hat{g}}^2}{(D+r^2)^{(m+5)/2}}\right).
	\end{gather*}
	By \eqref{eq:gradient_D} and \eqref{eq:Laplacian_D}, we conclude that $c_m^{-1}\Delta_{\overline{g}}K$ equals
	\begin{equation}
		\label{eq:Laplacian_of_K}
		\frac{3(m+1)r}{(D+r^2)^{(m+3)/2}}-\frac{(m+1)(m+3)r^3}{(D+r^2)^{(m+5)/2}}
		+\frac{m(m+1)r}{(D+r^2)^{(m+3)/2}}-\frac{(m+1)(m+3)rD}{(D+r^2)^{(m+5)/2}}
	\end{equation}
	modulo $O(K)$, and \eqref{eq:Laplacian_of_K} actually vanishes.
\end{proof}

\begin{lem}
	\label{lem:approximate_harmonic_extension_of_functions}
	Let $\varphi\in C^0(\bdry M)$. We define the function $w\in C^\infty(\bdry M\times(0,r^*))$ by
	\begin{equation*}
		w(x,r)=\int_{\bdry M}K(x,r;x')\varphi(x')dV_{\Hat{g}}(x').
	\end{equation*}
	Then the following holds:
	\begin{enumerate}
		\item $w$ is continuously extended to $\bdry M\times[0,r^*)$ and $w|_{\bdry M}=\varphi$;
		\item $r\abs{\grad_{\overline{g}}w}_{\overline{g}}=o(1)$; and
		\item $r\Delta_{\overline{g}}w=o(1)$.
		\item If $\varphi\in C^1(\bdry M)$, then $w\in C^1(\bdry M\times[0,r^*))$ and
			$r\abs{\nabla_{\overline{g}}^2w}_{\overline{g}}=o(1)$.
	\end{enumerate}
\end{lem}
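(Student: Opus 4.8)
The plan is to exploit the pointwise estimates and the three limit relations from Lemma~\ref{lem:approximate_Poisson_kernel}, treating each assertion as a ``differentiation under the integral sign'' statement followed by an application of the corresponding property of $K$. For (i), I would first note that the tail integral over $\bdry M\setminus B_{\Hat g}(x,\delta)$ contributes $O(r)$ (as in the proof of \eqref{eq:integral_of_K}), so the boundary behavior of $w$ is governed entirely by the local piece near $x$; after passing to geodesic normal coordinates and rescaling $\xi\mapsto r\xi$ exactly as in \eqref{eq:change_of_variable}, the local piece becomes a convolution of $\varphi$ (pulled back and evaluated at $x+r\xi$) against the Euclidean Poisson kernel $K_0(0,1;\xi)$, which is an approximate identity. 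Continuity of $\varphi$ then gives $w(x,r)\to\varphi(x)$, and uniformity in $x$ follows from uniform continuity of $\varphi$ on the compact manifold $\bdry M$ together with the uniform convergence statements already in Lemma~\ref{lem:approximate_Poisson_kernel}. Joint continuity of the extended $w$ on $\bdry M\times[0,r^*)$ is then routine.

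For (ii) and (iii), I would differentiate under the integral, which is justified because the integrand and its $\overline g$-gradient (resp.\ Laplacian) are smooth in $(x,r)$ for $r>0$ and $\bdry M$ is compact. This gives
\begin{equation*}
	r\,\grad_{\overline g}w(x,r)=\int_{\bdry M}r\,\grad_{\overline g}K(x,r;x')\,\varphi(x')\,dV_{\Hat g}(x'),
	\qquad
	r\,\Delta_{\overline g}w(x,r)=\int_{\bdry M}r\,\Delta_{\overline g}K(x,r;x')\,\varphi(x')\,dV_{\Hat g}(x').
\end{equation*}
The subtlety is that \eqref{eq:integral_of_grad_K} and \eqref{eq:integral_of_Laplacian_K} only tell us the integral of $K$'s derivatives against the constant function $1$ tends to $0$, whereas here we integrate against $\varphi$. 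The standard fix is to write $\varphi(x')=\varphi(x)+(\varphi(x')-\varphi(x))$: the $\varphi(x)$ term is exactly $\varphi(x)$ times the integral of the $\overline g$-gradient (resp.\ Laplacian) of $K$, which is $o(r^{-1})$ by \eqref{eq:integral_of_grad_K} and \eqref{eq:integral_of_Laplacian_K}, while for the remainder we use the pointwise bounds $|r\grad_{\overline g}K|\le CK$ and $|\Delta_{\overline g}K|\le CK$ to dominate it by $C\int_{\bdry M}|\varphi(x')-\varphi(x)|K(x,r;x')\,dV_{\Hat g}(x')$. The last integral tends to $0$ uniformly in $x$: split it at $d_{\Hat g}(x,x')<\eta$, where uniform continuity of $\varphi$ makes $|\varphi(x')-\varphi(x)|$ small while $\int K\le 1+o(1)$, and at $d_{\Hat g}(x,x')\ge\eta$, where $K\le c_m r/\eta^{m+1}=O(r)$ and $\varphi$ is bounded. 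This yields $o(1)$ in both cases. (Here $\Delta_{\overline g}K$ contributes an extra factor $r$ relative to the $\grad$ case, but this only helps.)

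For (iv), when $\varphi\in C^1(\bdry M)$, I would upgrade the approximate-identity argument: writing $\varphi(x')=\varphi(x)+\langle\grad_{\Hat g}\varphi(x),\xi\rangle_{\Hat g}+o(|\xi|)$ in geodesic coordinates and using that $K_0$ and its first derivatives have vanishing odd moments, one finds that $\partial w/\partial r$ and $\grad_{\Hat g}w$ extend continuously to $r=0$, with $(\partial w/\partial r)|_{\bdry M}=0$ and $(\grad_{\Hat g}w)|_{\bdry M}=\grad_{\Hat g}\varphi$; this gives $w\in C^1(\bdry M\times[0,r^*))$. For the Hessian bound $r|\nabla^2_{\overline g}w|_{\overline g}=o(1)$ I would again differentiate twice under the integral and use the estimate $|r^2\nabla^2_{\overline g}K|_{\overline g}=O(K)$ noted in the proof of Lemma~\ref{lem:approximate_Poisson_kernel}; the new ingredient is that, because $\varphi$ is now $C^1$, I can integrate by parts once (in $x'$, transferring one $\Hat g$-derivative off $K$ onto $\varphi$, modulo curvature terms from $\Delta_{\Hat g}D$ that are $O(D)$ and hence harmless) so that only \emph{one} extra derivative of $K$ beyond those controlled by \eqref{eq:integral_of_grad_K} appears; then the same ``$\varphi$ versus $\grad\varphi$'' splitting plus dominated convergence gives $o(r^{-1})$. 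The main obstacle is precisely this last point---arranging the integration by parts so that the worst term reduces to the already-established limits---and keeping track of the lower-order curvature corrections coming from the replacement of $\overline g$ by $dr^2+\Hat g$ via \eqref{eq:asymptotic_Einstein_rephrased}; everything else is a routine approximate-identity estimate.
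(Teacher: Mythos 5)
For parts (i)--(iii) your argument is essentially the one in the paper: both reduce matters to the approximate--identity limit $\int_{\bdry M}K(x,r;x')\,\abs{\varphi(x')-\varphi(x)}\,dV_{\Hat g}(x')\to 0$ uniformly in $x$, and then invoke the pointwise bounds $\abs{r\grad_{\overline g}K}\le CK$, $\abs{\Delta_{\overline g}K}\le CK$ together with \eqref{eq:integral_of_K}--\eqref{eq:integral_of_Laplacian_K}. Your splitting $\varphi(x')=\varphi(x)+(\varphi(x')-\varphi(x))$ is exactly what the paper does implicitly, so these three parts are fine.

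Part (iv) is where your route diverges, and it has a genuine gap. Your plan for $r\abs{\nabla^2_{\overline g}w}_{\overline g}=o(1)$ is to differentiate twice and then ``integrate by parts once in $x'$, transferring one $\Hat g$-derivative off $K$ onto $\varphi$.'' That only works for components of $\nabla^2_{\overline g}w$ carrying at least one tangential index: for $\nabla^2_{\overline g}w(\partial_r,\partial_r)$ you have $\int(\partial_r^2K)\varphi\,dV$, and there is no $x'$-derivative of $K$ to move onto $\varphi$, since differentiating in $r$ does not correspond to a derivative in the integration variable. The pointwise bound $\abs{r^2\nabla^2_{\overline g}K}=O(K)$ gives only $r\abs{\nabla^2_{\overline g}w}=O(r^{-1})$, so you cannot fall back on that either. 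The paper's proof avoids this by feeding the now-continuous tangential derivative $Xw$ back into statement (ii) to bound the mixed and tangential-tangential components, and then deducing the purely radial component from (iii): since $r\Delta_{\overline g}w=o(1)$ and the tangential second derivatives contribute $o(r^{-1})$, one gets $r\,\partial_r^2w=o(1)$. You need some such use of (iii); the integration-by-parts trick alone does not close this. Two smaller issues: the assertion $(\partial w/\partial r)|_{\bdry M}=0$ is not justified (the odd-moment vanishing of $K_0$ is exact only for the flat kernel and flat volume form; curvature corrections to $dV_{\Hat g}$ and to $D$ give $O(1)$ contributions to $\int\partial_rK\,dV$, and in any case $\partial_rw|_{\bdry M}=0$ is neither claimed nor needed), and the ``curvature terms from $\Delta_{\Hat g}D$'' you mention are the wrong object for a single integration by parts --- the relevant identity is $\grad_xD=-(\text{parallel transport of})\grad_{x'}D$, i.e.\ \eqref{eq:gradient_of_squared_distance}, which the paper exploits via $XK=-X'K$; the error after integrating by parts comes from the divergence of the transported field, not from $\Delta_{\Hat g}D$.
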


\begin{proof}
	To show (i), by \eqref{eq:integral_of_K} it suffices to prove that
	\begin{equation}
		\label{eq:limit_for_boundary_value_of_approximate_Poisson_integral}
		\lim_{r\to 0}\int_{\bdry M}K(x,r;x')\abs{\varphi(x')-\varphi(x)}dV_{\Hat{g}}(x')=0,
		\qquad\text{uniformly in $x$}.
	\end{equation}
	This follows by a standard argument (see the proof of \cite[Theorem 2.6]{GilbargTrudinger}).
	By Lemma \ref{lem:approximate_Poisson_kernel}, (ii) and (iii) also follow from
	\eqref{eq:limit_for_boundary_value_of_approximate_Poisson_integral}.
	To show (iv), let $X$ be a vector field on $\bdry M$ and consider
	\begin{equation*}
		Xw(x)=\int_{\bdry M}XK(x,r;x')\varphi(x')dV_{\Hat{g}}(x').
	\end{equation*}
	In order to prove that $Xw\in C^0(\bdry M\times[0,r^*))$,
	it suffices to check that the integral over $B=B_{\Hat{g}}(x,\delta)$ converges to a continuous
	function in $x$ as $r\to 0$ uniformly.
	If $x'\in B$ and $\gamma$ the unit-speed minimizing geodesic from $x$ to $x'$,
	then by \eqref{eq:gradient_of_distance} and \eqref{eq:approximate_Poisson_kernel},
	\begin{equation*}
		XK(x,r;x')=-X'K(x,r;x'),
	\end{equation*}
	where $X'(x')$ is the parallel translation of $X(x)$ along $\gamma$ and is applied to the $x'$ variable.
	Therefore, if $\nu_{\Hat{g}}$ denotes the outward unit normal vector field along $\bdry B$,
	\begin{equation*}
		\begin{split}
			&\int_BXK(x,r;x')\varphi(x')dV_{\Hat{g}}(x')\\
			&=-\int_BX'K(x,r;x')\varphi(x')dV_{\Hat{g}}(x')\\
			&=\int_BK(x,r;x')\mathcal{L}_{X'}(\varphi\,dV_{\Hat{g}})(x')
			-\int_{\bdry B}K(x,r;x')\varphi(x')\braket{X',\nu_{\Hat{g}}}_{\Hat{g}}dS_{\Hat{g}}(x')
		\end{split}
	\end{equation*}
	and the last expression is continuous up to the boundary by (i).
	Thus all the components of $r\nabla^2_{\overline{g}}w$ except
	$r\nabla^2_{\overline{g}}w(\partial_r,\partial_r)$ are $o(1)$ by (ii).
	Finally, (iii) implies that $r\nabla_{\overline{g}}^2w(\partial_r,\partial_r)$ is also $o(1)$.
\end{proof}

\begin{prop}
	\label{prop:local_approximation}
	Suppose $f\in C^1(\bdry M,\bdry N)$ has nowhere vanishing differential.
	Let $\mathcal{U}=U\times[0,r_0)$, $\mathcal{V}=V\times[0,\rho_0)$ be normal boundary coordinate
	neighborhoods of $\overline{M}$, $\overline{N}$, respectively, for which $f(\overline{U})\subset V$.
	Then there exists
	$v\in C^1(\mathcal{U},\mathcal{V})\cap C^\infty(\mathring{\mathcal{U}},\mathring{\mathcal{V}})$
	such that $v|_U=f|_U$ and the norm $\abs{\tau}$ of the tension field of $v$ satisfies $\abs{\tau}=o(1)$.
\end{prop}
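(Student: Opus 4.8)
The strategy is to build $v=(v^1,\dots,v^n,\rho)$ whose Neumann data along $U$ is exactly the one forced by Lemma~\ref{lem:Neumann_data}, and then to invoke that lemma. Set $\kappa:=\sqrt{\Hat{e}(f)/m}$ on $\overline U$; since $f\in C^1$ and $df$ is nowhere vanishing, $\kappa\in C^0(\overline U)$ and $\kappa>0$. Extend each $f^\alpha|_{\overline U}$ to a $C^1$ function on $\bdry M$ with values in the coordinate range of $V$, and $\kappa$ to a strictly positive $\Tilde\kappa\in C^0(\bdry M)$. Using the kernel $K$ of~\eqref{eq:approximate_Poisson_kernel}, let $\Hat{f}^\alpha$ and $\Hat{\kappa}$ be the approximate harmonic extensions of these functions provided by Lemma~\ref{lem:approximate_harmonic_extension_of_functions}; by part~(iv), $\Hat{f}^\alpha\in C^1(\bdry M\times[0,r^*))$, so $b^\alpha:=(\partial\Hat{f}^\alpha/\partial r)|_{\bdry M}\in C^0(\bdry M)$, and we take its approximate harmonic extension $\Hat{b}^\alpha$ as well. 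I would then define
\begin{equation*}
	v^\alpha=\Hat{f}^\alpha-r\Hat{b}^\alpha,\qquad\rho=r\Hat{\kappa},
\end{equation*}
first on $U\times[0,r_1)$ with $r_1>0$ small enough that the image lies in $\mathcal V$ --- possible since $\Hat{f}^\alpha\to f^\alpha$ while $r\Hat{b}^\alpha,r\Hat{\kappa}\to0$ uniformly as $r\to0$ and $f(\overline U)$ is compact in $V$ --- and then on all of $\mathcal U$ by precomposing with a diffeomorphism of the $r$-variable that is the identity near $r=0$, which changes nothing near $\bdry M$.

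The verification is bookkeeping with Lemma~\ref{lem:approximate_harmonic_extension_of_functions}. Parts~(i), (ii), (iv) give that $\Hat{f}^\alpha$ is $C^1$ up to $U$ and that $r\Hat{b}^\alpha$, $r\Hat{\kappa}$ extend to $C^1$ functions on $\mathcal U$ (their derivatives being $\Hat{b}^\alpha+r\,\partial_r\Hat{b}^\alpha$, $r\,\partial_a\Hat{b}^\alpha$ and the analogues for $\Hat\kappa$, all continuous up to $U$), so $v\in C^1(\mathcal U,\mathcal V)\cap C^\infty(\mathring{\mathcal U},\mathring{\mathcal V})$ with $v(\mathring{\mathcal U})\subset\mathring{\mathcal V}$ because $\rho=r\Hat{\kappa}>0$ on $\mathring{\mathcal U}$ (as $K>0$, $\Tilde\kappa>0$). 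One has $v|_U=f|_U$, and on $U$
\begin{equation*}
	\frac{\partial v^\alpha}{\partial r}=b^\alpha-b^\alpha=0,\qquad
	\frac{\partial\rho}{\partial r}=\Hat{\kappa}\big|_U=\sqrt{\tfrac{\Hat{e}(f)}{m}},
\end{equation*}
which is~\eqref{eq:boundary_value_of_harmonic_map}. Since $\overline{g}$, $\overline{h}$ are $C^2$ and $v$ is $C^1$ up to $U$, the Christoffel part of $\tau_{\overline{g},\overline{h}}$ is $O(1)$, so $\tau_{\overline{g},\overline{h}}^\gamma=\Delta_{\overline{g}}v^\gamma+O(1)$ for each $\gamma\in\set{1,\dots,n,\infty}$; expanding $\Delta_{\overline{g}}(r\Hat{b}^\alpha)$ and $\Delta_{\overline{g}}(r\Hat{\kappa})$ by Leibniz and applying parts~(ii), (iii) (together with $r\Delta_{\overline{g}}\Hat{f}^\alpha=o(1)$) gives $r\Delta_{\overline{g}}v^\gamma=o(1)$, hence $\abs{\tau_{\overline{g},\overline{h}}}_{\overline{h}}=o(r^{-1})$. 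Lemma~\ref{lem:Neumann_data} then yields $\abs{\tau}=o(1)$.

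The only genuinely delicate point --- and the reason for the correction term $-r\Hat{b}^\alpha$ --- is to meet both halves of~\eqref{eq:boundary_value_of_harmonic_map} while preserving $r\Delta_{\overline{g}}v^\alpha=o(1)$: the bare extension $\Hat{f}^\alpha$ has the right Dirichlet value and Laplacian decay but a generally nonzero normal derivative $b^\alpha$ on $U$ which, being merely continuous, cannot simply be dropped, whereas subtracting $r\Hat{b}^\alpha$ removes it and, by the gradient and Laplacian bounds for $K$ in Lemma~\ref{lem:approximate_Poisson_kernel}, perturbs the Laplacian and Hessian of $v$ only within the allowed orders. The $\rho^{-1}$-singular terms in~\eqref{eq:tension_divided} might look dangerous but are not: with the above Neumann data $\rho/r\to\kappa>0$ and the bracketed quantities there tend to $0$ and $-(m-1)\kappa^2$, so those terms stay bounded and combine with $-(m-1)(\partial\rho/\partial r)$ into $o(1)$ --- but this is already absorbed into the ``only if'' half of Lemma~\ref{lem:Neumann_data} and needs no separate treatment here.
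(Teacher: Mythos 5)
Your construction is correct and follows the same overall architecture as the paper's (build $(v^\alpha,\rho)$ with the Dirichlet and Neumann data forced by Lemma~\ref{lem:Neumann_data}, then invoke the ``if'' direction of that lemma), but the correction term you use is genuinely different. The paper sets $v^\alpha = w^\alpha - r\,\partial_r w^\alpha$ and $\rho = rw$, where $w^\alpha$ and $w$ are the approximate Poisson extensions of $f^\alpha$ and $\sqrt{\Hat e(f)/m}$ respectively; you instead take $v^\alpha = \Hat f^\alpha - r\Hat b^\alpha$, where $\Hat b^\alpha$ is a \emph{fresh} approximate Poisson extension of the boundary normal derivative $b^\alpha = (\partial_r\Hat f^\alpha)|_{\bdry M}$, rather than the function $\partial_r\Hat f^\alpha$ itself. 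Both choices have the same trace and normal derivative along $U$ (namely $f^\alpha$ and $0$, since $\Hat b^\alpha|_{\bdry M}=b^\alpha$ by Lemma~\ref{lem:approximate_harmonic_extension_of_functions}(i)), so both satisfy~\eqref{eq:boundary_value_of_harmonic_map}.

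What your variant buys is a cleaner verification of $\abs{\tau_{\overline g,\overline h}}_{\overline h}=o(r^{-1})$: after the Leibniz expansion of $\Delta_{\overline g}(r\Hat b^\alpha)$, every term is controlled directly by parts (i)--(iii) of Lemma~\ref{lem:approximate_harmonic_extension_of_functions} applied to $\Hat b^\alpha$, whereas the paper's $\Delta_{\overline g}(r\,\partial_r w^\alpha)$ formally involves a third derivative of $w^\alpha$, which Lemma~\ref{lem:approximate_harmonic_extension_of_functions} does not state a bound for and which the paper handles only implicitly (it can be done by a direct computation with the explicit kernel $K$, since $K - r\partial_r K$ is again a Poisson-type kernel, but this is not spelled out). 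Your final paragraph correctly identifies the $\rho^{-1}$-singular terms in~\eqref{eq:tension_divided} as a potential worry and correctly notes that they are already disposed of by the converse direction of Lemma~\ref{lem:Neumann_data}; the device of shrinking the $r$-range and reparametrizing to arrange $v(\mathcal U)\subset\mathcal V$ matches the paper's brief ``we extend these functions to $\mathcal U$.'' In short: same blueprint, different (and arguably tighter) choice of corrector, no gaps.
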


\begin{proof}
	Let $f=(f^1,\dots,f^n)$ on $U$ and $v=(v^1,\dots,v^n,\rho)$ on $\mathcal{U}$,
	the latter of which is to be determined.
	Then $v^\alpha|_U$ has to be $f^\alpha|_U$, and from Lemma \ref{lem:Neumann_data} it is also necessary that
	$(\partial v^\alpha/\partial r)|_U=0$ and $(\partial\rho/\partial r)|_U=\sqrt{\Hat{e}(f)/m}$.
	We extend the functions $f^\alpha$ to $\bdry M$ continuously differentiably,
	and define $v^1$, $\dotsc$, $v^n$, $\rho$ near $\mathcal{U}\cap\bdry M$ by
	\begin{equation}
		\label{eq:local_approx_harmonic}
		v^\alpha=w^\alpha-r\frac{\partial w^\alpha}{\partial r}\qquad\text{and}\qquad
		\rho=rw,
	\end{equation}
	where
	\begin{equation*}
		w^\alpha(x)=\int_{\bdry M}K(x,r;x')f^\alpha(x')dV_{\Hat{g}}(x')\qquad\text{and}\qquad
		w(x)=\int_{\bdry M}K(x,r;x')\sqrt{\frac{\Hat{e}(f)(x')}{m}}dV_{\Hat{g}}(x').
	\end{equation*}
	We extend these functions to $\mathcal{U}$ so that they determine a map
	$v\in C^0(\mathcal{U},\mathcal{V})\cap C^\infty(\mathring{\mathcal{U}},\mathring{\mathcal{V}})$.
	Differentiating \eqref{eq:local_approx_harmonic}, near $\mathcal{U}\cap\bdry M$ we get
	\begin{equation*}
		\frac{\partial v^\alpha}{\partial x^a}
		=\frac{\partial w^\alpha}{\partial x^a}-r\frac{\partial^2 w^\alpha}{\partial x^a\partial r},\qquad
		\frac{\partial v^\alpha}{\partial r}
		=-r\frac{\partial^2 w^\alpha}{\partial r^2},\qquad
		\frac{\partial\rho}{\partial x^a}=r\frac{\partial w}{\partial x^a},\qquad
		\frac{\partial\rho}{\partial r}=w+r\frac{\partial w}{\partial r}.
	\end{equation*}
	By Lemma \ref{lem:approximate_harmonic_extension_of_functions}, these are continuous
	up to the boundary, which implies that $v$ is actually in $C^1(\mathcal{U},\mathcal{V})$.
	Moreover, $\abs{\tau}=o(1)$ by Lemmas \ref{lem:Neumann_data} and \ref{lem:approximate_harmonic_extension_of_functions}.
\end{proof}

We patch up the local approximate harmonic maps above to get the following result.

\begin{prop}
	\label{prop:global_approximation}
	Suppose $u_0\in C^1(\overline{M},\overline{N})$ satisfies $u_0(\bdry M)\subset\bdry N$ and
	$f=u_0|_{\bdry M}$ has nowhere vanishing differential.
	Then, there exists $v\in C^1(\overline{M},\overline{N})\cap C^\infty(M,N)$ in the same
	relative homotopy class for which $\abs{\tau}=o(1)$.
\end{prop}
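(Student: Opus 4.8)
The plan is to build $v$ in two stages: glue, over a collar of $\bdry M$, the local models furnished by Proposition~\ref{prop:local_approximation}, and then splice the resulting collar map onto $u_0$ over the compact interior of $\overline{M}$ and smooth it there. For the first stage, use compactness of $\bdry M$ and continuity of $u_0$ to pick a finite cover $\set{U_i}_{i=1}^\ell$ of $\bdry M$ by coordinate neighborhoods with $f(\overline{U}_i)\subset V_i$ for suitable coordinate neighborhoods $V_i$ of $\bdry N$; writing $\mathcal{U}_i=U_i\times[0,r_0)$ and $\mathcal{V}_i=V_i\times[0,\rho_0)$, Proposition~\ref{prop:local_approximation} yields $v_i\in C^1(\mathcal{U}_i,\mathcal{V}_i)\cap C^\infty(\mathring{\mathcal{U}}_i,\mathring{\mathcal{V}}_i)$ with $v_i|_{U_i}=f|_{U_i}$ and $\abs{\tau(v_i)}=o(1)$. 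The crucial point is that by Lemma~\ref{lem:Neumann_data} all the $v_i$ have the \emph{same} first-order behavior along $\bdry M$: in addition to $v_i|_{\bdry M}=f$, one has $\bdry v_i^\alpha/\bdry r=0$ and $\bdry(\rho\circ v_i)/\bdry r=\sqrt{\Hat{e}(f)/m}$ on $\bdry M$. Hence on any overlap the difference $v_i-v_j$, read in common coordinates, vanishes along $\bdry M$ together with all its first derivatives, so $v_i-v_j=o(r)$ and $\bdry(v_i-v_j)=o(1)$ as $r\to 0$. Now fix a smooth background metric on $\overline{N}$---it has a positive convexity radius on a neighborhood of the compact set $f(\bdry M)$---and a partition of unity $\set{\chi_i}$ on $\bdry M$ subordinate to $\set{U_i}$, pulled back to the collar to be independent of $r$, and define $\tilde v$ near $\bdry M$ to be the geodesic center of mass of the $v_i$ with weights $\chi_i$. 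Since every $v_i$ tends to $f$ as $r\to 0$, for $r<r_1$ (some small $r_1$) the $v_i$ at a given point lie in a common convex ball, so $\tilde v$ is well defined, smooth for $0<r<r_1$, of class $C^1$ up to $\bdry M$ with $\tilde v|_{\bdry M}=f$, and maps $\set{0<r<r_1}$ into $N$ because each $\rho\circ v_i$ is positive.

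The heart of the matter---and the step I expect to be the main obstacle---is to verify $\abs{\tau(\tilde v)}=o(1)$, since $\tau$ is a second-order nonlinear operator that is not a priori well behaved under gluing. By Lemma~\ref{lem:Neumann_data} it suffices to check that $\tilde v$ carries the right Neumann data---immediate from $\sum_i\chi_i=1$ and the $r$-independence of the $\chi_i$---and that $\abs{\tau_{\overline{g},\overline{h}}(\tilde v)}_{\overline{h}}=o(r^{-1})$. For the latter I would work in local coordinates $(x^a,r)$ on $\overline{M}$ and $(y^\alpha,\rho)$ on $\overline{N}$, in which $\tau_{\overline{g},\overline{h}}(\tilde v)$ is a Laplacian term $\overline{g}^{ij}\bdry_i\bdry_j\tilde v$ plus first-order terms whose coefficients---built from the $C^2$ metrics $\overline{g}$ and $\overline{h}$---are bounded near $\bdry M$, and in which $\tilde v$ differs from the linear combination $\sum_i\chi_i v_i$ by an amount that, with its first two derivatives, contributes only $O(1)$ to $\tau_{\overline{g},\overline{h}}$. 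Substituting $\sum_i\chi_i v_i$, every term where a derivative falls on some $\chi_i$ can, using $\sum_i\bdry\chi_i=\sum_i\bdry^2\chi_i=0$, be rearranged to act on a difference $v_i-v_{i_0}$ or $\bdry(v_i-v_{i_0})$ and is therefore $o(1)$; and since the Christoffel symbols of $\overline{h}$ are Lipschitz while $\tilde v-v_i=o(r)$ and $\bdry\tilde v-\bdry v_i=o(1)$, the remaining terms sum to $\sum_i\chi_i\,\tau_{\overline{g},\overline{h}}(v_i)+o(r^{-1})$. As $\tau_{\overline{g},\overline{h}}(v_i)=o(r^{-1})$ for each $i$ by Lemma~\ref{lem:Neumann_data}, we obtain $\abs{\tau_{\overline{g},\overline{h}}(\tilde v)}_{\overline{h}}=o(r^{-1})$, hence $\abs{\tau(\tilde v)}=o(1)$.

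It remains to make $v$ global, in the relative homotopy class of $u_0$, and smooth in $M$. Shrinking $r_1$ if necessary so that $\tilde v$ and $u_0$ stay within the convexity radius throughout $\set{r<r_1}$---legitimate since both tend to $f$ as $r\to 0$---a routine cutoff-and-geodesic-interpolation produces $v_1\in C^1(\overline{M},\overline{N})$ equal to $\tilde v$ on $\set{r\le a}$ and to $u_0$ on $\set{r\ge b}$ for some $0<a<b<r_1$; since along $\bdry M$ the interpolating geodesic is constant equal to $f$, the interpolation homotopy stays in $\mathcal{M}_f$, so $v_1$ lies in the relative homotopy class of $u_0$. One then pushes $v_1$ slightly into $N$ over $\set{r>a}$---by an inward vector field near $\bdry N$ and a cutoff supported there---and finally $C^0$-approximates by a map that is $C^\infty$ on $M$, both modifications being arranged trivial on $\set{r\le a/2}$ and, near the interface, where the map is already smooth. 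The outcome $v$ lies in $C^1(\overline{M},\overline{N})\cap C^\infty(M,N)$, maps $M$ into $N$, stays in the relative homotopy class of $u_0$ (the last two modifications being $C^0$-small homotopies in $\mathcal{M}_f$), and coincides with $\tilde v$ near $\bdry M$; as $\abs{\tau}=o(1)$ concerns only $r\to 0$, it follows that $\abs{\tau(v)}=o(1)$.
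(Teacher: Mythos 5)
Your proof is essentially correct, but it follows a genuinely different route from the paper's. You glue all the local models $v_i$ from Proposition~\ref{prop:local_approximation} \emph{simultaneously}, using an $r$-independent partition of unity $\{\chi_i\}$ on $\bdry M$ and the Riemannian (Karcher) center of mass on $\overline{N}$, and only afterwards splice the resulting collar map onto $u_0$. The paper instead does the gluing \emph{sequentially}: starting from $\tilde v_0=u_0$, it defines $\tilde v_i=(1-\psi)\tilde v_{i-1}+\psi v_i$ in coordinates on a convex $\mathcal V_i\subset\mathbb{R}^n$, one chart at a time, shrinking later $\mathcal U'_j$'s as it goes. Both proofs exploit exactly the same mechanism, which you correctly identify as the heart of the matter: Lemma~\ref{lem:Neumann_data} reduces $\abs{\tau}=o(1)$ to (a) the Neumann data \eqref{eq:boundary_value_of_harmonic_map} and (b) $\abs{\tau_{\overline g,\overline h}}_{\overline h}=o(r^{-1})$, and both (a) and (b) are preserved under gluing because all the $v_i$ agree to first order along $\bdry M$. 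The paper's sequential scheme sidesteps any multi-point averaging technology at the cost of an induction; your scheme is a one-shot construction but pays for it with the center-of-mass apparatus and a genuinely nonlinear dependence of $\tilde v$ on the $v_i$, which you then have to argue contributes only $O(1)$ (in fact $o(1)$) to $\tau_{\overline g,\overline h}$.

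Two points you should tighten. First, your claim that $\tilde v$ maps $\{0<r<r_1\}$ into $N$ ``because each $\rho\circ v_i$ is positive'' is not immediate: the center of mass of points with $\rho>0$ with respect to an arbitrary smooth background metric on $\overline N$ need not have $\rho>0$. The fix is available from your own estimates: the spread $\max_{i,j}\abs{v_i(p)-v_j(p)}$ is $o(r)$ while $\rho\circ v_i$ is bounded below by a constant times $r$ (since $\bdry(\rho\circ v_i)/\bdry r=\sqrt{\Hat e(f)/m}>0$ on $\bdry M$), so for $r$ small the center of mass is forced to stay in $\{\rho>0\}$; alternatively, choose the background metric to be $d\rho^2+\Hat h$ in the collar. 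Second, the step ``$\tilde v$ differs from $\sum_i\chi_i v_i$ by an amount that, with its first two derivatives, contributes only $O(1)$ to $\tau_{\overline g,\overline h}$'' deserves an explicit sentence: writing $\tilde v=\sum_i\chi_i v_i+F(v_1,\dotsc,v_\ell;\chi)$ in local coordinates on $\overline N$, the correction $F$ is a smooth function vanishing together with its first derivatives on the diagonal $\{q_1=\dotsb=q_\ell\}$, so $\bdry F(v_\bullet)=O(\text{spread})=o(r)$ and hence the term $\bdry F\cdot\bdry^2 v_i=o(r)\cdot O(r^{-1})=o(1)$, with all other contributions bounded. With these two remarks filled in, the argument is complete.
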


\begin{proof}
	We may assume that $u_0\in C^1(\overline{M},\overline{N})\cap C^\infty(M,N)$ by mollification.
	We take finite sets of normal boundary coordinate neighborhoods $\set{\mathcal{U}_i}_{i=1}^l$
	and $\set{\mathcal{V}_i}_{i=1}^l$ of $\overline{M}$ and $\overline{N}$, respectively,
	so that the following conditions are satisfied:
	\begin{itemize}
		\item $\set{U_i}_{i=1}^l$ covers $\bdry M$, where $U_i=\mathcal{U}_i\cap\bdry M$;
		\item each $V_i=\mathcal{V}_i\cap\bdry N$ is mapped by the coordinates $(y^1,\dots,y^n)$ onto a
			convex open subset of $\mathbb{R}^n$;
		\item each pair of $\mathcal{U}_i$ and $\mathcal{V}_i$ satisfies the assumption of Proposition
			\ref{prop:local_approximation}; hence we can take a map
			$v_i\in C^1(\mathcal{U}_i,\mathcal{V}_i)\cap
			C^\infty(\mathring{\mathcal{U}}_i,\mathring{\mathcal{V}}_i)$ such that
			$v_i|_{U_i}=f|_{U_i}$ and $\abs{\tau(v_i)}=o(1)$;
		\item $u_0(\mathcal{U}_i)\subset\mathcal{V}_i$.
	\end{itemize}
	We take a relatively compact subset $\mathcal{U}'_i=U'_i\times[0,r'_i)$ of each $\mathcal{U}_i$
	so that $\set{U'_i}$ still covers $\bdry M$, and introduce the following notation:
	\begin{equation*}
		\tilde{\mathcal{U}}_i=\bigcup_{j=1}^i\mathcal{U}'_j.
	\end{equation*}
	Now we shall inductively define $\tilde{v}_i\in C^1(\overline{M},\overline{N})\cap C^\infty(M,N)$ so that
	it lies in the relative homotopy class of $u_0$, $\tilde{v}_i(\mathcal{U}'_j)\subset\mathcal{V}_j$
	for $j=i+1,$ $\dots,$ $l$, and $\abs{\tau(\tilde{v}_i)}=o(1)$ in $\tilde{\mathcal{U}}_i$.
	In this process we allow ourselves to shrink $\mathcal{U}'_i$ by making $r'_k$ smaller.
	If it is once done, then $v=\tilde{v}_l$ becomes the desired map.
	We put $\tilde{v}_0=u_0$, which is regarded as trivially satisfying the requirement.
	Suppose $\tilde{v}_{i-1}$ is constructed.
	Let $\psi\colon\overline{M}\longrightarrow[0,1]$ be a smooth function supported in $\mathcal{U}_i$
	such that $\psi=1$ in $\mathcal{U}'_i$. Then we define $\tilde{v}_i$ first in $\mathcal{U}_i$ by
	\begin{equation*}
		\tilde{v}_i^\alpha=(1-\psi)\tilde{v}_{i-1}^\alpha+\psi v_i^\alpha
		\qquad\text{and}\qquad
		\tilde{v}_i^\infty=(1-\psi)\tilde{v}_{i-1}^\infty+\psi v_i^\infty
	\end{equation*}
	using the coordinates in $\mathcal{V}_i$, which makes sense by the convexity of the image of $V_i$ in
	$\mathbb{R}^n$.
	We extend it to $\overline{M}$ by setting
	$\tilde{v}_i=\tilde{v}_{i-1}$ in $\overline{M}\setminus\mathcal{U}_i$.
	Then $\tilde{v}_i\in C^1(\overline{M},\overline{N})\cap C^\infty(M,N)$ stays in the same relative homotopy
	class, and since $\tilde{v}_i|_{\bdry M}=f$, we can replace $r'_j$, $j=i+1$, $\dots$, $l$ with smaller numbers
	so that $\tilde{v}_i(\mathcal{U}'_j)\subset\mathcal{V}_j$ for those $j$.
	As for the tension field, as $\tilde{v}_{i-1}$ and $v_i$ are $C^1$ up to $\mathcal{U}_i\cap\bdry M$,
	it is clear that
	\begin{equation*}
		\tau_{\overline{g},\overline{h}}(\tilde{v}_i)
		=(1-\psi)\tau_{\overline{g},\overline{h}}(\tilde{v}_{i-1})+\psi\tau_{\overline{g},\overline{h}}(v_i)+O(1)
	\end{equation*}
	and hence $\abs{\tau_{\overline{g},\overline{h}}(\tilde{v}_i)}_{\overline{h}}=o(r^{-1})$
	by Lemma \ref{lem:Neumann_data}. Moreover, since $\tilde{v}_{i-1}$ and $v_i$ both satisfy
	\eqref{eq:boundary_value_of_harmonic_map}, so does $\tilde{v}_i$.
	Therefore by Lemma \ref{lem:Neumann_data} again, we obtain $\abs{\tau(\tilde{v}_i)}=o(1)$ in $\mathcal{U}_i$,
	and hence in $\tilde{\mathcal{U}}_i$.
\end{proof}

\section{Existence and uniqueness}
\label{sec:existence}

For $0<\delta<r^*$, we set $\mathcal{B}_\delta=\mathcal{B}^M_\delta=M\setminus\set{0<r\le\delta}$.
The subsets $\mathcal{B}^N_\kappa$ of $N$ are similarly defined for $0<\kappa<\rho^*$.
Given $u_0\in C^1(\overline{M},\overline{N})$ such that $u_0(\bdry M)\subset\bdry N$,
let $v\in C^1(\overline{M},\overline{N})\cap C^\infty(M,N)$ be an approximate solution
to the harmonic map equation constructed in Proposition \ref{prop:global_approximation}.
Then for each $\delta\in(0,r^*)$, by Hamilton's work~\cite{Hamilton}, there exists a harmonic map 
$u_\delta\in C^\infty(\overline{\mathcal{B}_\delta},N)$ satisfying
$u_\delta|_{\bdry\mathcal{B}_\delta}=v|_{\bdry\mathcal{B}_\delta}$
that is relatively homotopic to $v|_{\overline{\mathcal{B}}_\delta}$.
To show this, it suffices to verify that $\bdry\mathcal{B}^N_\kappa$ is convex in $N$,
and it is easily observed from the formula for $\fourIdx{h}{}{}{}{\Gamma}$ that is similar to
\eqref{eq:Christoffel_symbols}.

Let $d_\delta\colon\overline{\mathcal{B}_\delta}\longrightarrow[0,\infty)$ be the function defined by
\begin{equation*}
	d_\delta(p)=d(u_\delta(p),v(p)),\qquad p\in\overline{\mathcal{B}_\delta},
\end{equation*}
where $d$ is the distance function of $h$.
In Proposition \ref{prop:uniform_boundedness_of_exhaustion}, we will show that $d_\delta$ is uniformly bounded
for small $\delta>0$.
Moreover, we consider the distance of $u_\delta$ and $v$ ``measured on the universal cover''
following Schoen and Yau~\cite{SchoenYau}.
Let $\tilde{M}$, $\tilde{N}$ be the universal covers of $M$, $N$.
They are equipped with $\tilde{g}=\varpi_M^*g$ and $\tilde{h}=\varpi_N^*h$,
where $\varpi_M$ and $\varpi_N$ are the standard projections.
Let $\tilde{v}\colon\tilde{M}\longrightarrow\tilde{N}$ a lift of $v\circ\varpi_M$.
We also take the lift
$\tilde{u}_\delta\colon\varpi_M^{-1}(\overline{\mathcal{B}_\delta})\longrightarrow\tilde{N}$
of $u_\delta\circ\varpi_M$ in such a way that $\tilde{u}_\delta$ is relatively homotopic to
$\tilde{v}|_{\varpi_M^{-1}(\overline{\mathcal{B}}_\delta)}$.
By using the distance function $\tilde{d}$ of $\tilde{h}$, we set
\begin{equation*}
	\tilde{d}_\delta(\tilde{p})=\tilde{d}(\tilde{u}_\delta(\tilde{p}),\tilde{v}(\tilde{p})),\qquad
	\tilde{p}\in\varpi_M^{-1}(\overline{\mathcal{B}_\delta}).
\end{equation*}
For each $\alpha\in\pi_1(M)=\pi_1(\overline{\mathcal{B}_\delta})$, there is $\beta\in\pi_1(N)$ for which
\begin{equation*}
	\tilde{u}_\delta(\alpha\cdot\tilde{p})=\beta\cdot\tilde{u}_\delta(\tilde{p})\qquad\text{and}\qquad
	\tilde{v}(\alpha\cdot\tilde{p})=\beta\cdot\tilde{v}(\tilde{p})
	\qquad\text{for any $\tilde{p}\in\varpi_M^{-1}(\overline{\mathcal{B}_\delta})$}.
\end{equation*}
In fact, $\beta$ can be chosen to be either the homotopy class $(u_\delta)_*\alpha$ or
$v_*\alpha$, which actually coincide.
Since $\pi_1(N)$ acts as an isometry on $\tilde{N}$, $\tilde{d}_\delta$ descends to a function on
$\overline{\mathcal{B}_\delta}$.
It is clear from the definition that $d_\delta\le\tilde{d}_\delta$.

As $\tilde{N}$ is an Hadamard manifold,
$\tilde{d}\colon\tilde{N}\times\tilde{N}\longrightarrow[0,\infty)$ is smooth away from the diagonal.
Define $\psi\colon\varpi_M^{-1}(\overline{\mathcal{B}_\delta})\longrightarrow\tilde{N}\times\tilde{N}$ by
\begin{equation*}
	\psi(\tilde{p})=(\tilde{u}_\delta(\tilde{p}),\tilde{v}(\tilde{p})),\qquad
	\tilde{p}\in\varpi_M^{-1}(\overline{\mathcal{B}_\delta}).
\end{equation*}
Let $\Delta_g$ be the (nonpositive) Laplacian of $g$.
If restricted to $\set{\tilde{d}_\delta>0}$ and lifted to the inverse image by $\varpi_M$,
$\Delta_g\tilde{d}_\delta$ is computed as follows, where $\nabla$ and $\grad$ are taken with respect to
the product metric:
\begin{equation*}
	\label{eq:Laplacian_of_difference_of_mappings}
	\Delta_g\tilde{d}_\delta
	=\tr_{\tilde{g}}\psi^*\nabla^2\tilde{d}
	+\braket{(\grad\tilde{d})\circ\psi,\tau(u_\delta)+\tau(v)}.
\end{equation*}
By the first variational formula and the fact that $u_\delta$ is a harmonic map, we can conclude that
\begin{equation}
	\label{eq:Laplacian_of_difference_of_mappings_simplified}
	\Delta_g\tilde{d}_\delta \ge \tr_{\tilde{g}}\psi^*\nabla^2\tilde{d}-\abs{\tau(v)}.
\end{equation}

We follow the argument of J\"ager and Kaul~\cite{JagerKaul} to get an estimate of the Hessian of $\tilde{d}$.
Let $q_1$, $q_2\in\tilde{N}$ be different points and $\gamma\colon[0,L]\longrightarrow\tilde{N}$
the unit-speed geodesic from $q_1$ to $q_2$, and $v_1\in T_{q_1}\tilde{N}$, $v_2\in T_{q_2}\tilde{N}$.
Take the Jacobi field $X$ along $\gamma$ such that $X(0)=v_1^{\mathrm{nor}}$ and $X(L)=v_2^{\mathrm{nor}}$,
where $v_1^{\mathrm{nor}}$ and $v_2^{\mathrm{nor}}$ are the normal parts with respect to $\gamma'$.
Then $X$ itself is normal to $\gamma'$. A standard argument (see \cite[Equation (3.4)]{JagerKaul}) shows that
\begin{equation}
	\label{eq:Hessian_of_distance_function}
	(\nabla^2\tilde{d})(v,v)=\braket{X,X'}(L)-\braket{X,X'}(0),
	\qquad v=v_1+v_2\in T_{q_1}\tilde{N}\oplus T_{q_2}\tilde{N},
\end{equation}
where the primes denotes the covariant differentiation by $\gamma'$.
We shall apply to the right-hand side a version of Rauch's comparison theorem.
Suppose that $\mu\colon[0,L]\longrightarrow\mathbb{R}$ satisfies
\begin{equation*}
	\mu(t)\ge\sup_\sigma K_{\tilde{h}}(\sigma),
\end{equation*}
where $K_{\tilde{h}}(\sigma)$ is the sectional curvature of the plane $\sigma\subset T_{\gamma(t)}\tilde{N}$ and
$\sigma$ runs all the planes containing $\gamma'(t)$.
Let $Y$ be a Jacobi field along $\gamma$ that is normal to $\gamma'$ such that $Y(0)=0$.
Then if the solution $s(t)$ of the equation $s''+\mu s=0$, $s(0)=0$, $s'(0)=1$
satisfies $s(t)>0$ for $0<t\le L$, we obtain (see~\cite[A2]{Karcher})
\begin{equation*}
	\frac{\abs{Y(t_1)}}{s(t_1)}\le\frac{\abs{Y(t_2)}}{s(t_2)}\qquad\text{for $0<t_1\le t_2\le L$}.
\end{equation*}
Suppose moreover that $s'(t)>0$ for $t>0$. Then
\begin{equation*}
	\frac{\abs{Y}'(t_1)}{s'(t_1)}\le\frac{\abs{Y(t_2)}}{s(t_2)},\qquad
	\frac{\abs{Y(t_1)}}{s(t_1)}\le\frac{\abs{Y}'(t_2)}{s'(t_2)}\qquad\text{for $0<t_1\le t_2\le L$}.
\end{equation*}
Thus we obtain
\begin{equation}
	\label{eq:comparison_inequality}
	\abs{Y}'(0)\le\frac{1}{s(t)}\abs{Y(t)},\qquad
	\braket{Y(t),Y'(t)}\ge\frac{s'(t)}{s(t)}\abs{Y(t)}^2\qquad\text{for $0<t\le L$}.
\end{equation}

Li and Tam~\cite{LiTam3} used these inequalities in the following way.
If $(N,h)$ has negative sectional curvature with upper bound $-\kappa^2$, where $\kappa>0$, one has
\begin{equation*}
	\abs{Y'(0)}\le\frac{\kappa}{\sinh\kappa L}\abs{Y(L)},\qquad
	\braket{Y(L),Y'(L)}\ge\frac{\kappa\cosh\kappa L}{\sinh\kappa L}\abs{Y(L)}^2.
\end{equation*}
This leads to the estimate (see~\cite[3.8.~Lemma]{JagerKaul})
\begin{equation}
	\label{eq:Laplacian_estimate_for_negative_curvature}
	\Delta_g\tilde{d}_\delta
	\ge\frac{\kappa(\cosh\kappa L-1)}{\sinh\kappa L}e(v)-\abs{\tau(v)}.
\end{equation}
If $\Delta_g\tilde{d}_\delta$ is understood as a distribution, then this is valid
not only in $\set{\tilde{d}_\delta>0}$ but also in $\mathcal{B}_\delta$ (see~\cite[Section 2]{DingWang}).
Under our assumption that $(N,h)$ has nonpositive sectional curvature,
one obtains $\tr_{\tilde{g}}\psi^*\nabla^2\tilde{d}\ge 0$ and hence $\Delta_g\tilde{d}_\delta\ge -\abs{\tau(v)}$
in a similar way, but we need a more subtle analysis.

\begin{lem}
	\label{lem:comparison}
	There exist constants $c>0$ and $l>0$ for which, if $\tilde{p}\in\tilde{M}$ satisfies $d_\delta(p)\ge l$,
	where $p=\varpi_M(\tilde{p})$, then
	\begin{equation}
		\label{eq:hessian_estimate}
		(\tr_{\tilde{g}}\psi^*\nabla^2\tilde{d})(\tilde{p})\ge ce(v)(p).
	\end{equation}
\end{lem}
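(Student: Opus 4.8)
The plan is to bound $\tr_{\tilde g}\psi^*\nabla^2\tilde d$ pointwise by localizing the J\"ager--Kaul computation to the end of $\tilde N$. Fix $\tilde p$ with $d_\delta(p)\ge l$ (the constant $l$ to be chosen), put $q_1=\tilde u_\delta(\tilde p)$, $q_2=\tilde v(\tilde p)$, and let $\gamma\colon[0,L]\to\tilde N$ be the unit-speed geodesic from $q_1$ to $q_2$, so that $L=\tilde d_\delta(\tilde p)\ge d_\delta(p)\ge l$. Choosing a $\tilde g$-orthonormal frame $\set{e_i}$ of $T_{\tilde p}\tilde M$ and writing $d\psi(e_i)=(d\tilde u_\delta(e_i),d\tilde v(e_i))$, one has $\tr_{\tilde g}\psi^*\nabla^2\tilde d=\sum_i(\nabla^2\tilde d)\bigl(d\psi(e_i),d\psi(e_i)\bigr)$ and $\sum_i\abs{d\tilde v(e_i)}^2=e(v)(p)$. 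Applying \eqref{eq:Hessian_of_distance_function} with the Jacobi fields $X_i$ normal to $\gamma'$ and $X_i(0)=(d\tilde u_\delta(e_i))^{\mathrm{nor}}$, $X_i(L)=(d\tilde v(e_i))^{\mathrm{nor}}$, decomposing each $X_i$ into the Jacobi fields vanishing at the two endpoints, and combining the Lagrange identity for Jacobi fields with \eqref{eq:comparison_inequality} relative to a comparison function $s$ ($s''+\mu s=0$, $s(0)=0$, $s'(0)=1$, $\mu(t)\ge\sup_{\sigma\ni\gamma'(t)}K_{\tilde h}(\sigma)$, $s,s'>0$ on $(0,L]$), one arrives at a lower bound of the shape
\begin{equation*}
	(\tr_{\tilde g}\psi^*\nabla^2\tilde d)(\tilde p)\ge\Lambda(s)\sum_i\Abs{(d\tilde v(e_i))^{\mathrm{nor}}}^2,\qquad
	\Lambda(s)=\frac{s'(L)^2-1}{s'(L)\,s(L)}>0,
\end{equation*}
where $(\,\cdot\,)^{\mathrm{nor}}$ denotes the component normal to $\gamma'(L)$; for constant $\mu=-\kappa^2$ this $\Lambda(s)$ is the coefficient $\kappa\tanh(\kappa L/2)$ occurring in \eqref{eq:Laplacian_estimate_for_negative_curvature}.

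Since $(N,h)$ is AH we have $K_h\to-1$ at $\bdry N$, so there is $\rho_1>0$ with $K_{\tilde h}\le-\tfrac12$ on $\tilde{\mathcal N}_1:=\varpi_N^{-1}(\set{0<\rho<\rho_1})$. Taking for $s$ the comparison function built from $\mu=-\tfrac12$ on the maximal terminal arc $[t_1,L]$ of $\gamma$ lying in $\tilde{\mathcal N}_1$ (with $t_1=0$ if $\gamma$ never leaves) and $\mu=0$ on $[0,t_1]$, an explicit computation with the resulting piecewise $s$ shows $\Lambda(s)\ge\phi(\ell)$, where $\ell=L-t_1$ is the length of that arc and $\phi$ is positive on $(0,\infty)$ and bounded below away from $0$ on every interval $[\ell_0,\infty)$. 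Thus it suffices to produce $l,\ell_0>0$, with constants independent of $\delta$, so that $d_\delta(p)\ge l$ forces $\ell\ge\ell_0$ together with $\sum_i\abs{(d\tilde v(e_i))^{\mathrm{nor}}}^2\gtrsim e(v)(p)$.

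The governing case is $p$ near $\bdry M$, where $e(v)\to m+1$ by Corollary \ref{cor:properness_energy_density}: from the construction in \S\ref{sec:approximate} one has $\rho\circ v\asymp r$ and, in a collar-adapted $g$-orthonormal frame, $dv$ sends $\partial_r$ to $(\sqrt{\Hat e(f)/m}+o(1))$ times the $\rho$-unit vector and sends $T\bdry M$ essentially into $T\bdry N$. Consequently $q_2$ sits deep inside $\tilde{\mathcal N}_1$: the geodesic $\gamma$ enters $\tilde{\mathcal N}_1$ at depth $\rho_1$ and descends to depth $\rho(v(p))\asymp r(p)$, so $\ell\ge\log(\rho_1/\rho(v(p)))-C$, which exceeds a prescribed $\ell_0$ as soon as $r$ is below a fixed threshold $r_1$; moreover a geodesic reaching that deep into an end of an AH manifold is nearly radial at its deep endpoint, $\gamma'(L)=-\rho\,\partial_\rho+O(e^{-\ell})$. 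Combining with the asymptotics of $dv$ gives $\abs{(d\tilde v)^*\gamma'(L)}_{\tilde g}^2=1+o(1)$, whence $\sum_i\abs{(d\tilde v(e_i))^{\mathrm{nor}}}^2=e(v)(p)-\abs{(d\tilde v)^*\gamma'(L)}^2=m+o(1)$. Shrinking $r_1$ until all error terms are dominated then yields the lemma on $\set{r<r_1}$ with $c\sim m(m+1)^{-1}\phi(\ell_0)$, for every $l>0$.

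The remaining — and, I expect, the hardest — step is the compact region $\set{r\ge r_1}$, where $e(v)$ is merely bounded and $v(p)$ stays in a fixed compact part of $N$, so that the favorable asymptotics above are unavailable. When $d_\delta(p)\ge l$ with $l$ large, $u_\delta(p)$ is then far from $v(p)$, and one must still force the connecting geodesic $\gamma$ to spend length at least $\ell_0$ in $\tilde{\mathcal N}_1$ while keeping $\sum_i\abs{(d\tilde v(e_i))^{\mathrm{nor}}}^2$ comparable to $e(v)(p)$. This requires controlling the geodesics of an AH metric in its ends and using the properness of $u_\delta$ and $v$ in an essential way — in the same spirit as the proof of Lemma \ref{lem:barrier} — the delicate point being to exclude $\gamma$ from hiding inside a possibly noncompact flat portion of the core of $\tilde N$. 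Once this is done, shrinking $c$ to absorb the bound on $e(v)$ there completes the proof.
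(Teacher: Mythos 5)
Your worry in case~(b) about $\gamma$ ``hiding inside a possibly noncompact flat portion of the core of $\tilde N$'' is exactly what the paper dissolves with an observation you have not made: since $(N,h)$ is AH, $K_h\to -1$ uniformly at $\bdry N$, so $\mathcal{K}:=\overline{\set{K_h>-1/2}}$ is a \emph{compact} subset of $N$, of finite diameter. Set $l=\diam\mathcal{K}+4$. If $d_\delta(p)\ge l$, the triangle inequality forces $d(u_\delta(p),\mathcal{K})\ge 2$ or $d(v(p),\mathcal{K})\ge 2$; and since the distance from a point of $\tilde N$ to $\varpi_N^{-1}(\mathcal K)$ coincides with the distance of its projection to $\mathcal K$, the corresponding lifted endpoint is at distance $\ge 2$ from $\varpi_N^{-1}(\mathcal K)$. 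Orient $\gamma$ so that this endpoint is $\gamma(L)$; then the terminal arc $\gamma|_{[L-2,L]}$ lies outside $\varpi_N^{-1}(\mathcal K)$, where $K_{\tilde h}\le -1/2$. This argument is uniform in $p$: there is no need to split $M$ into $\set{r<r_1}$ and $\set{r\ge r_1}$, no need to track whether $\gamma$ enters the end $\set{\rho<\rho_1}$ of $N$, and no need for the asymptotics of $dv$ from \S\ref{sec:approximate} — so your entire case~(a) analysis is superfluous. The paper then runs the Riccati equation $q'=-\mu-q^2$ for $q=s'/s$: $\mu\le 0$ gives $q>0$ and $s'\ge 1$ (hence $s(L)\ge L\ge 4$), while $\mu\le -1/2$ on $[L-2,L]$ gives $q(L)\ge 1/2$; combined with \eqref{eq:comparison_inequality} this yields $c=1/4$, which is your ``$\Lambda(s)\ge\phi(\ell_0)$'' with $\ell_0=2$ fixed once and for all. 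In short, the gap you flag in case~(b) is real, and the missing idea is the compactness of $\set{K_h>-1/2}$ in $N$ (not in $\tilde N$) together with the elementary diameter argument that $d_\delta(p)\ge\diam\mathcal K+4$ pushes one endpoint of $\gamma$ at least distance $2$ away from $\mathcal K$.
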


\begin{proof}
	Take a compact subset $\mathcal{K}\subset N$ so that the sectional curvature satisfies $K_h\le -1/2$
	in $N\setminus\mathcal{K}$.
	Let $l=\diam\mathcal{K}+4$. Then, since $d(u_\delta(p),\mathcal{K})\ge 2$ or $d(v(p),\mathcal{K})\ge 2$
	is satisfied,
	$\tilde{u}_\delta(\tilde{p})$ and $\tilde{v}(\tilde{p})$ can be connected by a unit-speed geodesic
	$\gamma\colon[0,L]\longrightarrow\tilde{N}$ for which $K_{\tilde{h}}\le -1/2$ along $\gamma|_{[L-2,L]}$.
	By the argument above the lemma, it suffices to take $c>0$ so that
	\begin{equation*}
		\abs{Y'(0)}\le c\abs{Y(L)}\qquad\text{and}\qquad
		\braket{Y(L),Y'(L)}\ge 2c\abs{Y(L)}^2
	\end{equation*}
	for any Jacobi field $Y$ along $\gamma$ with $Y(0)=0$ that is perpendicular to $\gamma'$. Let
	\begin{equation*}
		\mu(t)=\sup_\sigma K_{\tilde{h}}(\sigma),
	\end{equation*}
	where $\sigma$ runs all the planes of $T_{\gamma(t)}\tilde{N}$ containing $\gamma'(t)$, and $s(t)$
	the solution of $s''+\mu s=0$, $s(0)=0$, $s'(0)=1$. The differential equation implies that, if $q=s'/s$,
	\begin{equation*}
		q'=-\mu-q^2.
	\end{equation*}
	Since $\mu(t)\le 0$, $q$ cannot change sign, so $q(t)>0$.
	Moreover, during $L-2\le t\le L$ we have $q'\ge 1/2-q^2$, from which one obtains $q(L)\ge 1/2$.
	On the other hand, we have $s'(t)\ge 1$ for $0\le t\le L$, so $s(L)\ge L\ge l\ge 4$.
	Therefore, from \eqref{eq:comparison_inequality}, we can take $c=1/4$.
\end{proof}

To establish a uniform bound of $d_\delta$, we also need the following.

\begin{lem}
	\label{lem:barrier}
	There exists a function $\varphi\in C^2(M)$ satisfying
	$\varphi>0$, $\Delta_g \varphi < 0$ in $M$ and $\varphi\to 0$ uniformly as $r\to 0$.
\end{lem}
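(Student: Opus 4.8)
The plan is to produce $\varphi$ as a small perturbation of an explicit model function that behaves like a power of $r$ near $\bdry M$, using two analytic facts about asymptotically hyperbolic manifolds: the positivity of the bottom $\lambda_0$ of the $L^2$-spectrum of $-\Delta_g$, and the mapping properties of the resolvent $(-\Delta_g-\lambda)^{-1}$. These are the ``deep analytic results'' referred to in the introduction; they let one bypass the (impossible by elementary means) task of controlling a globally superharmonic positive function near infinity.

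First I would record the spectral input. Since $(M,g)$ is AH, Mazzeo's analysis \cite{Mazzeo} shows the essential spectrum of $-\Delta_g$ is $[m^2/4,\infty)$ with at most finitely many eigenvalues below it; as an $L^2$ harmonic function on the complete infinite-volume manifold $M$ must vanish, $0$ is not in the spectrum, so $\lambda_0:=\inf\operatorname{spec}(-\Delta_g)>0$. Fix $\lambda\in(0,\min\{\lambda_0,m^2/4\})$ and put $\sigma_\pm=\tfrac12\bigl(m\pm\sqrt{m^2-4\lambda}\bigr)$, so that $0<\sigma_-<m/2<\sigma_+<m$. Using the normal form $\overline{g}=dr^2+\Hat{g}+O(r^2)$ from Lemma \ref{lem:CDLS}, which gives $\abs{dr}_{\overline{g}}^2=1+O(r^2)$ and $\Delta_{\overline{g}}r=O(1)$, one computes
$$\Delta_g r^\delta=\delta(\delta-m)r^\delta+O(r^{\delta+1})$$
for any exponent $\delta$. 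Choosing $\delta\in(\sigma_-,m/2)$ makes $\delta(\delta-m)+\lambda<0$, so there are $c>0$ and $r_1>0$ with $(\Delta_g+\lambda)r^\delta\le-c\,r^\delta<0$ on $\{0<r\le r_1\}$; that is, $r^\delta$ is a strict supersolution of $(\Delta_g+\lambda)u=0$ near $\bdry M$, and of course $r^\delta\to0$.

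Next I would globalize. Let $\Phi_0\in C^\infty(M)$ be positive with $\Phi_0=r^\delta$ on $\{r\le r_1\}$ and $\Phi_0$ equal to the constant $r_1^\delta$ on $\{r\ge 2r_1\}$, positively interpolated in between. Then $G:=(\Delta_g+\lambda)\Phi_0$ is bounded on $\overline{M}$ and satisfies $G\le-c\,r^\delta<0$ on $\{r\le r_1\}$, so its positive part is supported in the compact set $\{r\ge r_1\}$ and we may fix a smooth, compactly supported $\rho\ge0$ on $M$ with $\rho\ge G$ everywhere. Since $\lambda<\lambda_0$, the resolvent $R:=(-\Delta_g-\lambda)^{-1}=\int_0^\infty e^{\lambda t}e^{t\Delta_g}\,dt$ is a bounded, positivity-preserving operator on $L^2(M)$, and $u:=R\rho$ solves $(\Delta_g+\lambda)u=-\rho$ with $u\ge0$; interior elliptic regularity gives $u\in C^\infty(M)$, while the known structure of the resolvent on AH manifolds (Mazzeo--Melrose: the resolvent carries functions supported away from $\bdry M$ into $r^{\sigma_+}C^\infty(\overline{M})$, with $\sigma_+>0$) shows that $u$ extends continuously to $\overline{M}$ with $u|_{\bdry M}=0$, so $u\to0$ uniformly as $r\to0$. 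Then $\varphi:=\Phi_0+u$ works: it is $C^2$, it is $>0$ since $\Phi_0>0$ and $u\ge0$, it tends to $0$ uniformly as $r\to0$ since both summands do, and
$$\Delta_g\varphi=(\Delta_g+\lambda)\varphi-\lambda\varphi=(G-\rho)-\lambda\varphi\le-\lambda\varphi<0\qquad\text{in }M.$$

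The routine ingredients are the pointwise Laplacian computation for $r^\delta$, the choice of $\Phi_0$ and $\rho$, and elliptic regularity. The real content, and the only genuine obstacle, is the appeal to the two deep facts: the spectral gap $\lambda_0>0$ (which makes $(-\Delta_g-\lambda)^{-1}$ exist and be positive for a positive $\lambda$) and the boundary decay $u\in r^{\sigma_+}C^\infty(\overline{M})$ of the resolvent applied to an interior source. A naive exhaustion argument does produce a positive strict supersolution of $\Delta_g\varphi<0$, but with no decay at infinity, and patching the boundary model $r^\delta$ to a supersolution on the compact core cannot be done by hand; it is precisely the resolvent that absorbs the error $G$ without destroying either positivity or the decay $\varphi\to0$.
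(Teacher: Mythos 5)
Your construction differs from the paper's but follows the same philosophy: reduce matters to the solvability of a linear elliptic problem for $\Delta_g$ with controlled boundary behavior, invoking the analytic theory available for asymptotically hyperbolic Laplacians. The paper produces $\psi=\log r+v$ with $\Delta_g\psi=-m$ and $\abs{\nabla\psi}$ bounded by applying the Fredholm isomorphism $-\Delta_g\colon C_1^{2,\alpha}(M)\longrightarrow C_1^{0,\alpha}(M)$ of Biquard and Lee, and then exponentiates, setting $\varphi=e^{\varepsilon\psi}$ so that $\Delta_g\varphi=\varepsilon e^{\varepsilon\psi}(-m+\varepsilon\abs{\nabla\psi}^2)<0$ for small $\varepsilon$. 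You instead work with the shifted operator $\Delta_g+\lambda$, $0<\lambda<\lambda_0$, using the indicial power $r^\delta$ with $\delta\in(\sigma_-,m/2)$ as a local model supersolution and correcting it with $u=(-\Delta_g-\lambda)^{-1}\rho$. Both routes are natural; yours makes the role of the indicial roots more explicit, while the paper's avoids any spectral input beyond the $L^2$-triviality of the kernel because the exponential automatically converts the constant $-m$ into a negative multiple of $\varphi$.

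There is, however, one step in your argument that is not available in the stated generality. You obtain the boundary decay of $u$ by quoting the Mazzeo--Melrose structure theorem for the resolvent, concluding $u\in r^{\sigma_+}C^\infty(\overline M)$. That parametrix construction requires $\overline g$ to be smooth (or at least polyhomogeneous) up to $\bdry M$; the paper assumes only $\overline g\in C^2(\overline M)$, and explicitly for this reason replaces the Mazzeo--Melrose/Graham--Zworski/Fefferman--Graham input (valid in the smooth case) with the Fredholm theorems of Biquard~\cite{Biquard} and Lee~\cite{Lee}, which hold in the low-regularity setting. The clean fix is to do the same here: apply Lee's Theorem C (or Biquard's Proposition I.3.5) to $\Delta_g+\lambda\colon C_\delta^{2,\alpha}(M)\longrightarrow C_\delta^{0,\alpha}(M)$, whose indicial roots are $\sigma_\pm$; since $\lambda<\lambda_0$ the $L^2$-kernel is trivial, the operator is an isomorphism for $\delta$ in the Fredholm range, and $u\in r^\delta C^{2,\alpha}(M)$ with $\delta>0$, which gives the uniform decay you need (the sharper polyhomogeneity claim is false in general for $C^2$ compactifications). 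The rest of your argument is correct: the computation $\Delta_g r^\delta=\delta(\delta-m)r^\delta+O(r^{\delta+1})$ only needs $\abs{dr}_{\overline g}^2=1+O(r)$ and $\Delta_{\overline g}r=O(1)$ (so the CDLS normal form is not even essential here), the positivity $u\ge0$ follows from the heat-kernel representation of the resolvent, and the final estimate $(\Delta_g+\lambda)\varphi=G-\rho\le0$ giving $\Delta_g\varphi\le-\lambda\varphi<0$ is sound.
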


\begin{proof}
	We first prove the existence of a bounded function $v\in C^2(M)$
	for which $\psi=\log r+v$ satisfies $\Delta_g \psi = - m$ and $\abs{\nabla\psi}$ is bounded
	(although it is actually overkilling to show the lemma).
	When $\overline{g}$ is smooth up to the boundary, it was shown by
	Fefferman and Graham~\cite[Theorem 4.1]{FeffermanGraham}
	based on the analysis of Mazzeo--Melrose~\cite{MazzeoMelrose} and Graham--Zworski~\cite{GrahamZworski}
	(the assumption made in \cite{FeffermanGraham} that the metric $g$ is approximately Einstein is
	irrelevant to this result).
	In the general case, we use the Fredholm theorem of Biquard~\cite[Proposition I.3.5]{Biquard} and
	Lee~\cite[Theorem C]{Lee}.
	Let $C_1^{k,\alpha}(M) = rC^{k,\alpha}(M)$, where $C^{k,\alpha}(M)$ is the space of $C^k$ functions
	whose intrinsic $C^{k,\alpha}$ norm is finite.
	Then $- \Delta_g \colon C_1^{2,\alpha}(M)\longrightarrow C_1^{0,\alpha}(M)$
	is a Fredholm operator of index zero and its kernel is the same as the $L^2$-kernel of $- \Delta_g$.
	But the $L^2$-kernel is obviously trivial, so this is actually an isomorphism.
	Since $m + \Delta_g \log r \in C^{0,\alpha}_1(M)$ by \eqref{eq:Christoffel_symbols},
	there exists $v\in C_1^{2,\alpha}(M)$ such that $- \Delta_g v = m  + \Delta_g \log r$.
	Moreover $\abs{\nabla\psi}$ is bounded because $v\in C_1^{2,\alpha}(M)$ implies
	$\abs{\nabla v}\in C_1^{1,\alpha}(M)$.

	Now we take such $\psi=\log r+v$ and set $\varphi=e^{\varepsilon\psi}=r^\varepsilon e^{\varepsilon v}$.
	Then clearly $\varphi$ uniformly tends to $0$ at the boundary, and
	\begin{equation}
		\label{eq:Laplacian_of_barrier}
		\Delta_g \varphi = \varepsilon e^{\varepsilon\psi}(\Delta_g \psi + \varepsilon\abs{\nabla\psi}^2) 
        = \varepsilon e^{\varepsilon\psi}(- m + \varepsilon\abs{\nabla\psi}^2).
	\end{equation}
	If we take sufficiently small $\varepsilon>0$, then $\Delta_g \varphi < 0$ holds everywhere.
\end{proof}

\begin{prop}
	\label{prop:uniform_boundedness_of_exhaustion}
	There exist constants $\delta_0>0$ and $C>0$ such that, for any $\delta\in (0,\delta_0)$,
	\begin{equation}
		d_\delta \leq C\qquad \text{in $\overline{\mathcal{B}_\delta}$}.
	\end{equation}
\end{prop}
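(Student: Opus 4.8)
The plan is to bound $d_\delta$ by a maximum principle, carried out on the compact manifold-with-boundary $\overline{\mathcal{B}_\delta}=\{r\ge\delta\}$, with the barrier $\varphi$ of Lemma~\ref{lem:barrier} controlling the interior and the negative curvature of $N$ near $\bdry N$, via Lemma~\ref{lem:comparison}, controlling what happens near $\bdry M$. Throughout one works with the lifted distance $\tilde d_\delta$, which descends to a continuous function on $\overline{\mathcal{B}_\delta}$ and dominates $d_\delta$.

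First I would assemble the differential inequalities. By \eqref{eq:Laplacian_of_difference_of_mappings_simplified} together with $K_h\le0$ one has $\Delta_g\tilde d_\delta\ge-\abs{\tau(v)}$ throughout $\mathcal{B}_\delta$ in the distributional sense (cf.\ the discussion following \eqref{eq:Laplacian_estimate_for_negative_curvature}), while Lemma~\ref{lem:comparison} improves this to $\Delta_g\tilde d_\delta\ge c\,e(v)-\abs{\tau(v)}$ on the open set $\{d_\delta>l\}$. By Corollary~\ref{cor:properness_energy_density} and Proposition~\ref{prop:global_approximation}, $e(v)\to m+1$ uniformly and $\abs{\tau(v)}=o(1)$, so I fix $r_1\in(0,r^*)$ with $e(v)\ge(m+1)/2$ and $\abs{\tau(v)}\le c(m+1)/4$ for $0<r\le r_1$; hence $\Delta_g\tilde d_\delta\ge c(m+1)/4>0$ on $\{0<r\le r_1\}\cap\{d_\delta>l\}$. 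On the compact set $\{r\ge r_1\}$ one has $-\Delta_g\varphi\ge b>0$ and $\abs{\tau(v)}$ bounded, and everywhere $\varphi\le\Phi_0:=\sup_M\varphi<\infty$ because $\varphi\to0$ at $\bdry M$.

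The second step is the comparison. Put $\delta_0=r_1$, fix $\delta<\delta_0$, choose $A>0$ independent of $\delta$ with $Ab>\sup_{\{r\ge r_1\}}\abs{\tau(v)}$, and set $F=\tilde d_\delta-A\varphi$ on $\overline{\mathcal{B}_\delta}$; being continuous on a compact set and vanishing on $\bdry\mathcal{B}_\delta$, it attains a maximum at some $p^*$. If $r(p^*)=\delta$, or if $p^*$ is interior with $F(p^*)\le0$, then $F\le0$, hence $d_\delta\le\tilde d_\delta\le A\varphi\le A\Phi_0$. Otherwise $p^*$ is interior with $F(p^*)>0$, so $\tilde d_\delta>0$ is smooth near $p^*$ and $\Delta_g\tilde d_\delta(p^*)\le A\,\Delta_g\varphi(p^*)<0$. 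Since $\Delta_g\tilde d_\delta(p^*)\ge-\abs{\tau(v)}(p^*)$ while $-A\,\Delta_g\varphi\ge Ab>\sup_{\{r\ge r_1\}}\abs{\tau(v)}$ on $\{r\ge r_1\}$, the point $p^*$ cannot lie in $\{r\ge r_1\}$; and if in addition $d_\delta(p^*)>l$, then $\Delta_g\tilde d_\delta(p^*)\ge c(m+1)/4>0$ contradicts $\Delta_g\tilde d_\delta(p^*)<0$. Thus the remaining, and only, case is an interior maximum of $F$ with $F(p^*)>0$, $r(p^*)<r_1$, and $d_\delta(p^*)\le l$, and the task reduces to bounding $F$ at such a point.

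The step I expect to be the real obstacle is precisely this last one. Near $\bdry M$ the barrier degenerates, since $-\Delta_g\varphi=\varepsilon e^{\varepsilon\psi}(m-\varepsilon\abs{\nabla\psi}^2)\to0$ by \eqref{eq:Laplacian_of_barrier}, and on $\{d_\delta\le l\}$ the sharp inequality of Lemma~\ref{lem:comparison} is unavailable, leaving only $\Delta_g\tilde d_\delta\ge-\abs{\tau(v)}$ with $\abs{\tau(v)}$ merely $o(1)$; moreover on this set $\tilde d_\delta$ may a priori far exceed $d_\delta$, as a long homotopy between $u_\delta$ and $v$ can inflate the lifted distance while the downstairs distance stays small. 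The way forward I would pursue is a sharper near-boundary barrier adapted to the AH geometry — built from the function $\psi=\log r+v$ of Lemma~\ref{lem:barrier}, whose Laplacian $-m$ does \emph{not} vanish at $\bdry M$ — combined with the properness of $v$ from Corollary~\ref{cor:properness_energy_density} to limit how much $\tilde d_\delta$ can overshoot $d_\delta$ near the boundary; and one must throughout keep track of the non-smoothness of the distance function on $N$, which is exactly why the comparison is run on the Hadamard cover $\tilde N$ and the estimates are interpreted distributionally.
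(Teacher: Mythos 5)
Your proposal tracks the paper's argument closely through its decisive steps: the distributional inequality $\Delta_g\tilde d_\delta\ge-\abs{\tau(v)}$ from nonpositive curvature, the sharpened inequality from Lemma~\ref{lem:comparison} where $d_\delta\ge l$, the choice of a radius ($r_1$; the paper calls it $\delta_0$) inside which $e(v)$ is bounded below and $\abs{\tau(v)}$ is small, the barrier from Lemma~\ref{lem:barrier} dominating $\abs{\tau(v)}$ on the compact part $\mathcal{B}_{\delta_0}$, and the maximum principle applied to $\tilde d_\delta-A\varphi$ on $\overline{\mathcal{B}_\delta}$. You correctly eliminate a boundary max and an interior max in $\{r\ge r_1\}$ or with $d_\delta>l$, reducing to an interior max $p^*$ with $r(p^*)<r_1$ and $d_\delta(p^*)\le l$. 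Up to this point the two arguments coincide.

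The gap is that you stop here. The paper concludes at once: since $d_\delta(p_0)<l$, one gets $d_\delta\le l+C_1\sup_M\varphi$ on all of $\overline{\mathcal{B}_\delta}$. You instead declare this final step an open obstacle and float a ``sharper near-boundary barrier'' as a possible fix. That suggestion is off target: near $\bdry M$ the barrier is not needed at all, because that region is precisely where Lemma~\ref{lem:comparison} operates; the barrier's role is confined to the compact set $\mathcal{B}_{\delta_0}$, where $-\Delta_g\varphi$ has a positive lower bound. What actually remains is only to pass from $d_\delta(p_0)<l$ to a bound on $\tilde d_\delta(p_0)$, which is exactly what feeds the maximum-principle chain $\tilde d_\delta(p)\le\tilde d_\delta(p_0)+C_1\sup_M\varphi$. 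Your worry that $\tilde d_\delta$ may strictly exceed $d_\delta$ at $p_0$ is legitimate as a matter of bookkeeping, and it is worth observing that the distinction can be dissolved using the very facts already at hand: for small $r(p_0)$ both $v(p_0)$ (by properness and $C^1$ regularity of $v$) and $u_\delta(p_0)$ (at distance $<l$ from $v(p_0)$) lie near $\bdry N$, where $i_h$ is as large as one wishes, and then a unique-geodesic/lifting argument -- the same device the paper deploys later in the proof of Theorem~\ref{thm:existence} to show $u$ is relatively homotopic to $u_\delta$ -- identifies $\tilde d_\delta(p_0)$ with $d_\delta(p_0)$. Because you neither execute this nor the paper's one-line conclusion, your submission is an accurate reconstruction of the strategy but not a proof; it halts at the one place where a (short) additional observation is required, and the alternative you gesture at would not supply it.
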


\begin{proof}
	For $c>0$ in Lemma \ref{lem:comparison},
	by Corollary \ref{cor:properness_energy_density} we can take $\delta_0 > 0$ so that
	\begin{equation*}
		\abs{\tau(v)(p)}<c\inf_{M\setminus\mathcal{B}_{\delta_0}}e(v)
		\qquad\text{for $p\in M\setminus\mathcal{B}_{\delta_0}$}.
	\end{equation*}
	Then by \eqref{eq:Laplacian_of_difference_of_mappings_simplified} and \eqref{eq:hessian_estimate},
	$\Delta_g\tilde{d}_\delta(p) > 0$ when $p \in \overline{\mathcal{B}_\delta} \setminus \mathcal{B}_{\delta_0}$
	and $d_\delta(p)\ge l$.
	Let $\varphi$ be the function in Lemma \ref{lem:barrier}, and we take $C_1>0$ for which
	$- C_1 \Delta_g \varphi > \abs{\tau(v)}$ in $\mathcal{B}_{\delta_0}$ so that
	\begin{equation}
		\label{eq:Laplacian_of_distance_minus_barrier}
		\Delta_g(\tilde{d}_\delta-C_1\varphi) > 0\qquad \text{in $\mathcal{B}_{\delta_0}$}.
	\end{equation}
	Now let $p_0\in\overline{\mathcal{B}_\delta}$ be a point at which $\tilde{d}_\delta-C_1\varphi$
	attains its maximum in $\overline{\mathcal{B}_\delta}$.
	If $p_0\in\bdry\mathcal{B}_\delta$, then since $\tilde{d}_\delta=0$ on $\bdry\mathcal{B}_\delta$,
	it follows that $d_\delta\le\tilde{d}_\delta\le C_1\sup_M\varphi$ in $\overline{\mathcal{B}_\delta}$.
	If $p_0\not\in\bdry\mathcal{B}_\delta$, then $p_0$ has to be in
	$\overline{\mathcal{B}_\delta} \setminus \mathcal{B}_{\delta_0}$
	by \eqref{eq:Laplacian_of_distance_minus_barrier}, and hence $d_\delta(p_0)<l$.
	Therefore, $d_\delta\le l+C_1\sup_M\varphi$ in $\overline{\mathcal{B}_\delta}$.
\end{proof}

We finish the proof of the main theorem.
Note that, since $(N, h)$ is asymptotically hyperbolic,
the pointwise injectivity radius $i_h\colon N\longrightarrow (0,\infty)$
uniformly diverges to infinity at $\bdry N$.

\begin{proof}[Proof of Theorem \ref{thm:existence}]
	Recall from Corollary \ref{cor:properness_energy_density} that $v$ has bounded energy density.
	With this and Proposition \ref{prop:uniform_boundedness_of_exhaustion},
	we can show using Cheng's interior gradient estimate~\cite{Cheng} (cf.\,\cite[Theorem\,4.8.1]{Jost})
	that there exist $C>0$ and $\delta_0 >0$ such that, for any $\delta\in(0,\delta_0)$,
	\begin{equation*}
		e(u_\delta) \leq C\qquad\text{in $\overline{\mathcal{B}_{2\delta}}$}.
	\end{equation*}
	Then by the elliptic $W^{2, p}$ estimates and the Sobolev imbedding theorem,
	for each $\alpha\in(0,1)$ there exists $C_{\alpha} > 0$ such that 
	\begin{equation*}
		\norm{du_\delta}_{0,\alpha;\overline{\mathcal{B}_{3\delta}}} \leq C_{\alpha} 
	\end{equation*}
	for any $\delta\in(0,\delta_0)$. Hence, the Schauder interior estimate implies
	\begin{equation*}
		\norm{du_\delta}_{1,\alpha;\overline{\mathcal{B}_{4\delta}}} \leq C'_{\alpha} 
	\end{equation*}
	for some $C'_{\alpha} > 0$ and for any $\delta\in(0,\delta_0)$.
	Thus the harmonic maps $u_\delta$ subconverge to a harmonic map $u\in C^\infty(M,N)$ in the $C^2$ topology
	on every compact subset of $M$.
	Clearly, $d_0=d(u,v)$ is also bounded in $M$.
	Therefore $u$ lies in $C^0(\overline{M},\overline{N})$, $u|_{\bdry M}=f$, and
	$\rho\circ u$ as well as $\rho\circ v$ is quasi-equivalent to $r$.

	Next we prove that $u$ is relatively homotopic to $u_0$. If we extend $u_\delta$ to $\overline{M}$ by setting
	$u_\delta|_{\overline{M}\setminus\overline{\mathcal{B}}_\delta}
	=v|_{\overline{M}\setminus\overline{\mathcal{B}}_\delta}$,
	$u_\delta$ is relatively homotopic to $u_0$,
	so it suffices to show that $u$ is relatively homotopic to some $u_\delta$.
	Let $\delta_0$ and $C$ be the constants in Lemma \ref{prop:uniform_boundedness_of_exhaustion}.
	Then if $\delta_1\in(0,\delta_0)$ is sufficiently small, $p\in M\setminus\mathcal{B}_{\delta_1}$ implies
	$i_h(u(p))>2C$.
	Since $u_\delta$ subconverges to $u$ on $\mathcal{B}_{\delta_1}$,
	we can take $\delta\in(0,\delta_1)$ such that
	\begin{equation*}
		d(u(p),u_\delta(p))<i_h(N),\qquad p\in\mathcal{B}_{\delta_1}.
	\end{equation*}
	We have $L_p=d(u(p),u_\delta(p))<i_h(u(p))$ for any $p\in M$.
	Hence there exists a unique unit-speed minimizing geodesic $\gamma_p\colon[0,L_p]\longrightarrow N$
	from $u(p)$ to $u_\delta(p)$.
	We define $\Phi\colon M\times[0,1]\longrightarrow N$ by
	\begin{equation*}
		\Phi(p,t)=\gamma_p(tL_p).
	\end{equation*}
	Then $\Phi$ is continuous in $p$ and $t$. Moreover, it continuously extends to a map
	$\overline{M}\times[0,1]\longrightarrow\overline{N}$ so that
	$\Phi(p,t)=f(p)$ for $p\in\bdry M$ and $t\in[0,1]$.
	Therefore, $u$ and $u_\delta$ are relatively homotopic.

	We now prove that $d_0=d(u,v)$ tends to $0$ uniformly at $\bdry M$. In fact, we show that
	\begin{equation*}
		\tilde{d}_0=\tilde{d}(\tilde{u},\tilde{v})\to 0\qquad\text{uniformly at $\bdry M$},
	\end{equation*}
	where $\tilde{u}$, $\tilde{v}\colon\tilde{\overline{M}}\longrightarrow\tilde{\overline{N}}$ are
	relatively homotopic lifts of $u\circ\varpi_M$, $v\circ\varpi_M$.
	Note that, since $u$ and $v$ are proper and $i_h(q)\to\infty$ as $q\to\bdry N$,
	Proposition \ref{prop:uniform_boundedness_of_exhaustion} implies that
	$u(p)$ and $v(p)$ are connected by a unique minimizing geodesic $\gamma_0$ along which $K_h\le -1/4$ for
	$p\in M\setminus\overline{\mathcal{B}_\delta}$ as long as $\delta>0$ is sufficiently small.
	Because of the asymptotic hyperbolicity, we can take a smaller $\delta>0$ if necessary so that
	$\gamma_0$ actually lifts to a geodesic $\gamma$ connecting $\tilde{u}(\tilde{p})$ and
	$\tilde{v}(\tilde{p})$ for some $\tilde{p}\in\varpi_M^{-1}(p)$.
	Now given any $\varepsilon>0$, there exists $\delta(\varepsilon)\in(0,\delta)$ such that
	$\abs{\tau(v)}<\varepsilon$ and $e(v)\ge m$ in $M\setminus\overline{\mathcal{B}_{\delta(\varepsilon)}}$.
	Then \eqref{eq:Laplacian_estimate_for_negative_curvature} is applicable since $K_{\tilde{h}}<-1/4$ along
	$\gamma$, and hence
	\begin{equation}
		\label{eq:fine_estimate_Laplacian_of_distance}
		\Delta_g \tilde{d}_0 \ge mf(\tilde{d}_0) - \varepsilon
		\qquad\text{in $M\setminus\overline{\mathcal{B}_{\delta(\varepsilon)}}$},
	\end{equation}
	where
	\begin{equation*}
		f(x)=\frac{\cosh(x/2)-1}{2\sinh(x/2)}.
	\end{equation*}
	Let $d_\varepsilon>0$ is the solution of $f(d_\varepsilon)=2\varepsilon/m$ and take a smooth function
	$\chi_\varepsilon\colon[0,\infty)\longrightarrow[0,\infty)$ such that
	$x\le\chi_\varepsilon(x)\le d_\varepsilon$ for $0\le x\le d_\varepsilon$,
	$\chi_\varepsilon(x)=x$ for $x\ge d_\varepsilon$,
	and the derivative $\chi_\varepsilon^{(k)}(0)$ vanishes for all $k\ge 1$.
	Then $\chi_\varepsilon\circ\tilde{d}_0$ is a smooth function in $M$.
	If $\varphi$ is the function in Lemma \ref{lem:barrier},
	we can take $C(\varepsilon)>0$ so that
	$\Delta_g(\chi_\varepsilon\circ\tilde{d}_0-C(\varepsilon)\varphi)>\varepsilon$ in
	$\overline{\mathcal{B}_{\delta(\varepsilon)}}$.
	Then, from the Omori--Yau generalized maximum principle, there exists $p_0\in M$ for which
	\begin{equation*}
		\chi_\varepsilon\circ\tilde{d}_0(p_0)-C(\varepsilon)\varphi(p_0)\ge
		\sup_M(\chi_\varepsilon\circ\tilde{d}_0-C(\varepsilon)\varphi)-\varepsilon\qquad\text{and}\qquad
		\Delta_g(\chi_\varepsilon\circ\tilde{d}_0-C(\varepsilon)\varphi)(p_0)\le\varepsilon.
	\end{equation*}
	From the definition of $C(\varepsilon)$, $p_0$ should be a point outside
	$\overline{\mathcal{B}_{\delta(\varepsilon)}}$.
	Moreover, $\chi_\varepsilon\circ\tilde{d}_0(p_0)\le d_\varepsilon$ holds.
	In fact, if it is not the case, then $\tilde{d}_0$ is smooth at $p_0$ and
	$\Delta_g\tilde{d}_0(p_0)<\Delta_g(\tilde{d}_0-C(\varepsilon)\varphi)(p_0)\le\varepsilon$,
	which contradicts \eqref{eq:fine_estimate_Laplacian_of_distance}. Consequently,
	\begin{equation*}
		d_\varepsilon+\varepsilon
		\ge \chi_\varepsilon\circ\tilde{d}_0(p_0)+\varepsilon
		\ge \chi_\varepsilon\circ\tilde{d}_0(p)-C(\varepsilon)\varphi(p)
		\ge \tilde{d}_0(p)-C(\varepsilon)\varphi(p)
		\qquad\text{for $p\in M$}.
	\end{equation*}
	Since $\varphi$ uniformly tends to $0$ as $r\to 0$, this shows that so does $\tilde{d}_0$.

	The uniqueness of $u$ is similarly proved as follows.
	If $u_1$, $u_2$ are two such harmonic maps and $\tilde{u}_1$, $\tilde{u}_2$ are relatively homotopic
	lifts of $u_1\circ\varpi_M$, $u_2\circ\varpi_M$,
	then by \eqref{eq:Laplacian_of_difference_of_mappings_simplified},
	$\tilde{d}(\tilde{u}_1,\tilde{u}_2)$ is subharmonic in $M$.
	On the other hand, the argument in the previous paragraph shows that $\tilde{d}(\tilde{u}_1,\tilde{u}_2)\to 0$
	uniformly at $\bdry M$.
	Then by the maximum principle, $\tilde{d}(\tilde{u}_1,\tilde{u}_2)$ must vanish identically.
	
	Finally we prove that $u$ is in $C^1(\overline{M},\overline{N})$. Since this is a local statement,
	it suffices to work on a normal boundary coordinate neighborhood $(\mathcal{U};x^1,\dots,x^m,r)$
	such that $u(\mathcal{U})$ and $v(\mathcal{U})$ are both contained in
	a normal boundary coordinate neighborhood $(\mathcal{V};y^1,\dots,y^n,\rho)$ of $\overline{N}$.
	As $u$ has bounded energy density with respect to $g$ and $h$,
	its energy density with respect to $\overline{g}$ and $\overline{h}$ is also bounded.
	Then by \eqref{eq:tension_divided} and the fact that $u^\infty$ and $v^\infty$ are quasi-equivalent to $r$,
	$\Delta_{\overline{g}}u^j$ and $\Delta_{\overline{g}}v^j$ are both $O(r^{-1})$.
	Let $p\in\mathring{\mathcal{U}}$ be such that the ball $B(p,r_0)$ with respect to the local coordinates
	$(x^1,\dots,x^m,r)$ is relatively compactly contained in $\mathring{\mathcal{U}}$, where $r_0=r(p)/2$.
	Then by the Schauder interior estimate~\cite[Theorem 6.2]{GilbargTrudinger},
	\begin{equation*}
		\norm{u^j}_{1,\alpha;B(p,r_0)}\le Cr_0^{-\alpha}
	\end{equation*}
	for some constant $C>0$ that is independent of $p$.
	Moreover, the uniform convergence $d_0\to 0$ implies $\abs{u^j-v^j}=o(r)$.
	Therefore by the interpolation inequality~\cite[Lemma 6.32]{GilbargTrudinger}, for any $\varepsilon>0$,
	\begin{equation*}
		\sup_{B(p,r_0)}\abs{\nabla_{\overline{g}}(u^j-v^j)}
		\le C\left(\varepsilon + \varepsilon^{-2}r_0^{-1}\sup_{B(p,r_0)}\abs{u^j-v^j}\right),
	\end{equation*}
	where $C>0$ is independent of $p$.
	Therefore, $\nabla_{\overline{g}}(u^j-v^j)=o(1)$ and so $u$ is $C^1$ up to $\mathcal{U}\cap\bdry M$.
\end{proof}

\begin{bibdiv}
\begin{biblist}
\bib{Akutagawa}{article}{
   author={Akutagawa, Kazuo},
   title={Harmonic diffeomorphisms of the hyperbolic plane},
   journal={Trans. Amer. Math. Soc.},
   volume={342},
   date={1994},
   number={1},
   pages={325--342},
}
\bib{Biquard}{article}{
   author={Biquard, Olivier},
   title={M\'etriques d'Einstein asymptotiquement sym\'etriques},
   journal={Ast\'erisque},
   number={265},
   date={2000},
   pages={vi+109},
}
\bib{Cheng}{article}{
   author={Cheng, Shiu Yuen},
   title={Liouville theorem for harmonic maps},
   conference={
      title={Geometry of the Laplace operator},
      address={Proc. Sympos. Pure Math., Univ. Hawaii, Honolulu, Hawaii},
      date={1979},
   },
   book={
      series={Proc. Sympos. Pure Math., XXXVI},
      publisher={Amer. Math. Soc., Providence, R.I.},
   },
   date={1980},
   pages={147--151},
}
\bib{CDLS}{article}{
   author={Chru{\'s}ciel, Piotr T.},
   author={Delay, Erwann},
   author={Lee, John M.},
   author={Skinner, Dale N.},
   title={Boundary regularity of conformally compact Einstein metrics},
   journal={J. Differential Geom.},
   volume={69},
   date={2005},
   number={1},
   pages={111--136},
}
\bib{DingWang}{article}{
   author={Ding, Wei Yue},
   author={Wang, You De},
   title={Harmonic maps of complete noncompact Riemannian manifolds},
   journal={Internat. J. Math.},
   volume={2},
   date={1991},
   number={6},
   pages={617--633},
}
\bib{Donnelly}{article}{
   author={Donnelly, Harold},
   title={Asymptotic Dirichlet problem for harmonic maps with bounded image},
   booktitle={Proceedings of the Euroconference on Partial Differential
   Equations and their Applications to Geometry and Physics (Castelvecchio
   Pascoli, 2000)},
   journal={Geom. Dedicata},
   volume={91},
   date={2002},
   pages={1--6},
}
\bib{Economakis}{article}{
   author={Economakis, Michael},
   title={A counterexample to uniqueness and regularity for harmonic maps
   between hyperbolic spaces},
   journal={J. Geom. Anal.},
   volume={3},
   date={1993},
   number={1},
   pages={27--36},
}
\bib{EconomakisThesis}{book}{
   author={Economakis, Michael},
   title={Boundary regularity of the harmonic map problem between
   asymptotically hyperbolic manifolds},
   note={Thesis (Ph.D.)--University of Washington},
   publisher={ProQuest LLC, Ann Arbor, MI},
   date={1993},
   pages={108},
}
\bib{EellsSampson}{article}{
   author={Eells, James, Jr.},
   author={Sampson, J. H.},
   title={Harmonic mappings of Riemannian manifolds},
   journal={Amer. J. Math.},
   volume={86},
   date={1964},
   pages={109--160},
}
\bib{FeffermanGraham}{article}{
   author={Fefferman, Charles},
   author={Graham, C. Robin},
   title={$Q$-curvature and Poincar\'e metrics},
   journal={Math. Res. Lett.},
   volume={9},
   date={2002},
   number={2-3},
   pages={139--151},
}
\bib{Fotiadis}{article}{
   author={Fotiadis, Anestis},
   title={Harmonic maps between noncompact manifolds},
   journal={J. Nonlinear Math. Phys.},
   volume={15},
   date={2008},
   number={suppl. 3},
   pages={176--184},
}
\bib{GilbargTrudinger}{book}{
   author={Gilbarg, David},
   author={Trudinger, Neil S.},
   title={Elliptic partial differential equations of second order},
   series={Grundlehren der Mathematischen Wissenschaften},
   volume={\bf 224},
   edition={2},
   publisher={Springer-Verlag, Berlin},
   date={1983},
   pages={xiii+513},
}
\bib{GrahamZworski}{article}{
   author={Graham, C. Robin},
   author={Zworski, Maciej},
   title={Scattering matrix in conformal geometry},
   journal={Invent. Math.},
   volume={152},
   date={2003},
   number={1},
   pages={89--118},
}
\bib{Hamilton}{book}{
   author={Hamilton, Richard S.},
   title={Harmonic maps of manifolds with boundary},
   series={Lecture Notes in Mathematics, vol. {\bf 471}},
   publisher={Springer-Verlag, Berlin-New York},
   date={1975},
   pages={i+168},
}
\bib{JagerKaul}{article}{
   author={J{\"a}ger, Willi},
   author={Kaul, Helmut},
   title={Uniqueness of harmonic mappings and of solutions of elliptic
   equations on Riemannian manifolds},
   journal={Math. Ann.},
   volume={240},
   date={1979},
   number={3},
   pages={231--250},
}
\bib{Jost}{book}{
   author={Jost, J{\"u}rgen},
   title={Harmonic mappings between Riemannian manifolds},
   series={Proceedings of the Centre for Mathematical Analysis, Australian
   National University},
   volume={\textbf{4}},
   publisher={Australian National University, Centre for Mathematical
   Analysis, Canberra},
   date={1984},
   pages={iv+177},
}
\bib{Karcher}{article}{
   author={Karcher, H.},
   title={Riemannian center of mass and mollifier smoothing},
   journal={Comm. Pure Appl. Math.},
   volume={30},
   date={1977},
   number={5},
   pages={509--541},
}
\bib{Lee}{article}{
   author={Lee, John M.},
   title={Fredholm operators and Einstein metrics on conformally compact
   manifolds},
   journal={Mem. Amer. Math. Soc.},
   volume={183},
   date={2006},
   number={864},
   pages={vi+83},
}
\bib{Leung}{book}{
   author={Leung, Man Chun},
   title={Harmonic maps between asymptotically hyperbolic spaces},
   note={Thesis (Ph.D.)--University of Michigan},
   publisher={ProQuest LLC, Ann Arbor, MI},
   date={1991},
   pages={129},
}
\bib{LiTam1}{article}{
   author={Li, Peter},
   author={Tam, Luen-Fai},
   title={The heat equation and harmonic maps of complete manifolds},
   journal={Invent. Math.},
   volume={105},
   date={1991},
   number={1},
   pages={1--46},
}
\bib{LiTam2}{article}{
   author={Li, Peter},
   author={Tam, Luen-Fai},
   title={Uniqueness and regularity of proper harmonic maps},
   journal={Ann. of Math. (2)},
   volume={137},
   date={1993},
   number={1},
   pages={167--201},
}
\bib{LiTam3}{article}{
   author={Li, Peter},
   author={Tam, Luen-Fai},
   title={Uniqueness and regularity of proper harmonic maps. II},
   journal={Indiana Univ. Math. J.},
   volume={42},
   date={1993},
   number={2},
   pages={591--635},
}
\bib{Mazzeo}{article}{
   author={Mazzeo, Rafe},
   title={The Hodge cohomology of a conformally compact metric},
   journal={J. Differential Geom.},
   volume={28},
   date={1988},
   number={2},
   pages={309--339},
}
\bib{MazzeoMelrose}{article}{
   author={Mazzeo, Rafe R.},
   author={Melrose, Richard B.},
   title={Meromorphic extension of the resolvent on complete spaces with
   asymptotically constant negative curvature},
   journal={J. Funct. Anal.},
   volume={75},
   date={1987},
   number={2},
   pages={260--310},
}
\bib{SchoenYau}{article}{
   author={Schoen, Richard},
   author={Yau, Shing Tung},
   title={Compact group actions and the topology of manifolds with
   nonpositive curvature},
   journal={Topology},
   volume={18},
   date={1979},
   number={4},
   pages={361--380},
}
\end{biblist}
\end{bibdiv}

\end{document}